\numberwithin{equation}{section}
\theoremstyle{theorem}
\newtheorem{theorem}{Theorem}[section]
\newtheorem{lemma}[theorem]{Lemma}
\theoremstyle{definition}
\theoremstyle{remark}
\theoremstyle{proof}
\newcommand{\C}{\mathbb{C}}
\newcommand{\R}{\mathbb{R}}
\newcommand{\cF}{\mathcal{F}}
\newcommand{\cL}{\mathcal{L}}
\newcommand{\cM}{\mathcal{M}}
\newcommand{\cS}{\mathcal{S}}
\newcommand{\sF}{\mathscr{F}}
\newcommand{\wh}{\widehat}
\newcommand{\wt}{\widetilde}
\renewcommand{\Re}{\operatorname{Re}}
\newcommand{\E}{\mathbb{E}}
\renewcommand{\P}{\mathbb{P}}
\renewcommand{\L}{\mathbb{L}}
\newcommand{\indi}[1]{\mathds{1}_{#1}}
\newcommand{\varep}{\varepsilon}
\newcommand{\ol}{\overline}
\newcommand{\norm}[1]{\| #1 \|}
\newcommand{\Paren}[1]{\left( #1 \right)}
\newcommand{\Abs}[1]{\left| #1 \right|}
\newcommand{\brk}[1]{\langle #1 \rangle}
\newcommand{\Ebrx}[2]{\E^{#1}\left[ #2\right]}
\newcommand{\parder}[2]{\frac{\partial #1 }{\partial #2 }}
\begin{document}
\title{Hardy-Stein identity for non-symmetric \\L\'vey processes and Fourier Multipliers}
\subjclass[2010]{Primary 
42B25; Secondary 
60J75}

 \keywords{It\^o's formula, Hardy-Stein identity, Littlewood-Paley square functions, Fourier multiplier}

\author{Rodrigo Ba\~nuelos}
\address{Department of Mathematics, Purdue University\\
150 N. University Street, West Lafayette, IN 47907-2067, USA}
\email{banuelos@purdue.edu}

\author{Daesung Kim}
\address{Department of Mathematics, Purdue University\\
150 N. University Street, West Lafayette, IN 47907-2067, USA}
\email{kim1636@purdue.edu}
\thanks{\it Both authors were supported in part  by NSF grant
\#1403417-DMS;  R. Ba\~nuelos PI}

\begin{abstract}
Using It\^o's formula for processes with jumps, we extend the Hardy-Stein identity proved in \cite{BBL} to non-symmetric L\'evy processes and derive its martingale version.  By  a symmetrization argument for Littlewood-Paley functions, the  $L^p$ boundedness of Fourier multipliers arising from  non-symmetric L\'evy processes is proved.    
\end{abstract}
\maketitle

\section{Introduction}

Littlewood-Paley square (quadratic)  functions have been of interest for  many years with many applications in harmonic analysis and probability.  On the analysis side, these include the classical square functions obtained from the Poisson semigroup as in \cite{Stein} and more general heat semigroups as in \cite{Stein0}.  On the probability side, these correspond to the celebrated Burkholder-Gundy inequalities which are the bread and butter of modern stochastic analysis. For a short review of some of this literature, we refer the reader to \cite{BBL}.  In \cite{BBL}, the authors extend some of the classical  Littlewood-Paley $L^p$ inequalities  for $1<p<\infty$ to non-local operators and then  apply them to prove $L^p$ bounds for certain Fourier multipliers that arise from transformation of symmetric L\'evy processes.  The symmetry of the L\'evy measure is crucial in the proof of the Littlewood-Paley inequalities for the full range of $1<p<\infty$.  

The key to the proof in \cite{BBL}, outside of fairly standard and well known uses of the Burkholder-Gundy inequalities and other arguments as in \cite{Stein}, is a Hardy-Stein identity  (\eqref{maineq} below) which is proved from properties of the semigroup.   In the classical case of the Laplacian, such Hardy-Stein  identity   follows from, essentially, Green's theorem and the chain rule as in Lemmas 1 and 2 in \cite[pp.86-87]{Stein}. In the case of Brownian motion, a probabilisitic Burkholder-Gundy type version of this Hardy-Stein  identity  can  be proved (see \cite{Ban}, \cite[p.152]{RavYor}) as a simple application of It\^o's formula.    While it is unknown (see \cite[p.265]{Da}) whether It\^o's  formula arguments give the Burkholder-Gundy inequalities for  general martingales with jumps,  the proof in \cite{Ban} can be adapted to our case of L\'evy processes studied here to prove the Hardy-Stein identity, Theorem \ref{taylorexpansion}.  In fact, our proof extends the ra the Hardy-Stein identity to non--symmetric  L\'evy measures and it gives a version, Theorem \ref{martHardyStein}, for martingales.  Finally, the proof contains additional information, further  illuminating the origins of the function $F(a,b;p)$ (see \eqref{eq:F}) used in \cite{BBL}.  

It is important to emphasize here that although the Hardy-Stein identity holds for non-symmetric L\'evy measures, the full comparability of the $L^p$-norms between the function itself and its Littlewood-Paley square function proved in \cite{BBL} requires symmetry and hence the main application given there to the boundedness of the Fourier multipliers requires it too.   In this paper, we show that the Littlewood-Paley proof can still be used to obtain the $L^p$ boundedness of the Fourier multipliers for non-symmetric L\'evy measures. Although  this argument uses the basic symmetrization technique as in \cite{BBB}, the application here goes via a symmetrization of the Littlewood-Paley function; see \eqref{defsquareftn} and \eqref{symG*}.   For more details on these types of Fourier multipliers which are, probabilistically speaking, natural extensions of those obtained from the conditional expectations of martingale transform as in \cite{Ban}, we refer the reader to \cite{AB}.

The paper is organized as follows.  The Hardy-Stein identity  via It\^o's formula is proved in \S3 where we also note that it holds for more general martingales.   In \S4, we prove the $L^p$ inequalities  for the Fourier multipliers arising from non-symmetric L\'evy processes.  

\section{Preliminaries}

The indicator function of a set $A$ is denoted by $\indi{A}$. For $a,b\in\R$, we denote by $a\wedge b = \min\{a,b\}$. The real part of a complex number $\xi$ is denoted by $\Re(\xi)=x$ where $\xi=x+iy$. For a set $B\subseteq \R^{d}$, we define $-B:=\{-x:x\in B\}$. An open ball in $\R^{d}$ of radius $r$, centered at $x_{0}\in\R^{d}$ is denoted by $B_{r}(x_{0})$. If $x_{0}=0$, we simply denote by $B_{r}$. For $f,g\in L^{2}(\R^{d})$, we define the inner product of $f$ and $g$ in $L^{2}(\R^{d})$ by $\brk{f,g}=\int_{\R^{d}}f(x)g(x)dx$.
Let $\cS(\R^{d})$ be the Schwartz space on $\R^{d}$ and $f\in \cS(\R^{d})$. We define the Fourier transform and the inverse Fourier transform of $f$ by 
\begin{eqnarray*}
	&&\sF(f)(\xi)=\wh{f}(\xi):=\int_{\R^{d}}f(x)e^{-ix\cdot \xi}dx,\\
	&&\sF^{-1}(f)(x)=f^{\vee}(x)=(2\pi)^{-d}\int_{\R^{d}}f(\xi)e^{i\xi\cdot x}d\xi.
\end{eqnarray*}
With our definition, Parseval's formula takes the form 
\begin{eqnarray}\label{parseval}
	\int_{\R^{d}}f(x)g(x)dx=\frac{1}{(2\pi)^{d}}\int_{\R^{d}}\wh{f}(\xi)\ol{\wh{g}(\xi)}d\xi, 
\end{eqnarray}
for $f,g\in L^{2}(\R^{d})$.

A $d$-dimensional stochastic process $(X_{t})_{t\geq0}$  defined on a filtered probability space $(\Omega,\cF,\P)$ is called  a \emph{L\'{e}vy process} if 
\begin{enumerate}[(i)]
	\item for $0\leq t_{0} < t_{1}< \cdots< t_{n}<\infty$, $\{X_{t_{k}}-X_{t_{k-1}}\}_{k\geq 1}$ are independent,
	\item for $0<s<t<\infty$ and a Borel set $A\subseteq \R^{n}$, $\P(X_{t}-X_{s}\in A)=\P(X_{t-s}\in A)$ and
	\item for all $\varep>0$ and $s\geq0$,
	\begin{eqnarray*}
		\lim_{t\to s}\P(|X_{t}-X_{s}|>\varep)=0.
	\end{eqnarray*}
\end{enumerate}
The characteristic exponent $\psi(\xi)$ of a L\'evy process $(X_{t})_{t\geq0}$ is defined by $\E[e^{i\xi\cdot X_{t}}]=e^{-t\psi(\xi)}$. The L\'{e}vy-Khintchine theorem tells us that $(X_{t})_{t\geq0}$ is a L\'evy process with characteristic exponent $\psi(\xi)$ if and only if there exists a triplet $(b,A,\nu)$ such that
\begin{eqnarray*}
	\psi(\xi)=ib\cdot \xi+\frac{1}{2}\xi\cdot A\xi
		+\int_{\R^{d}}(1-e^{i\xi\cdot y}+i\xi\cdot y \indi{B_{1}}(y))\nu(dy), 
\end{eqnarray*}
where $b\in\R^{d}$, $A$ is a positive semi-definite $d\times d$ matrix, and $\nu$ is a measure on Borel sets in $\R^{d}$ satisfying
\begin{eqnarray*}
	\int_{\R^{d}\setminus \{0\}}(1\wedge|y|^{2})\nu(dy)<\infty.
\end{eqnarray*}
We call $\nu$ the \emph{L\'{e}vy measure}. This gives a large class of stochastic processes that have been extensively studied. For instance, Brownian motion is the case where $b=0$, $\nu=0$, and $A$ is the identity matrix. We say that $(X_{t})_{t\geq0}$ is a \emph{pure jump L\'{e}vy process} if $b=0$ and $A=0$. We also say that $(X_{t})_{t\geq0}$ is \emph{symmetric} if $\nu$ is symmetric. We refer the reader to \cite{Da} for further information on these processes.  

Let $(X_{t})_{t\geq0}$ be a L\'evy process with the L\'evy measure $\nu$. The jump of $X_{t}$ at time $s$ is denoted by $\Delta X_{s}=X_{s}-X_{s-}$. For $t\geq 0$ and a Borel subset $A\subseteq \R^{n}$, we define the jump measure of $(X_{t})_{t\geq0}$ by
\begin{eqnarray*}
	N(t,A)
	&=& \text{the number of jumps during time $[0,t]$ of size in $A$} \\
	&=& |\{s\in[0,t]:\Delta X_{s}\in A\}|.
\end{eqnarray*}
Note that $N(t,A)$ is a Poisson random measure with intensity $dt\otimes \nu$. One can decompose $X_{t}$ into its continuous part $X_{t}^{c}$ and jump part
\begin{eqnarray*}
	X_{t}
	&=& X_{t}^{c}+\sum_{s:0\leq s\leq t}\Delta X_{s} \\
	&=&  bt+G(t)+\int_{|x|\geq 1}x N(t,dx)+\int_{|x|<1}x \wt{N}(t,dx), 
\end{eqnarray*}
where $b\in\R^{d}$, $G(t)$ is a Gaussian process and $\wt{N}(t,A):=N(t,A)-t\nu(A)$. Following the standard terminology, we call $\wt{N}(t,A)$ the compensated jump measure.

Next, recall It\^{o}'s formula for a general stochastic process with jumps. Let $(X_{t})_{t\geq0}$ be a L\'evy process with its jump measure $N(t,\cdot)$. Let $(Z_{t})_{t\geq0}$ be a $d$-dimensional stochastic process defined by
\begin{eqnarray}\label{semimartingale}
	Z_{t}=Z_{0}+M_{t}+A_{t}+\int_{0}^{t}\int_{\R^{d}}G(s,x)N(ds, dx)+\int_{0}^{t}\int_{\R^{d}}H(s,x)\widetilde{N}(ds, dx), 
\end{eqnarray}
where $M_{t}$ is a continuous square integrable local martingale, $A_{t}$ is a continuous adapted process of bounded variation with $A_{0}=0$, and $G(t,x)=(G_{1}(t,x),\cdots,G_{d}(t,x))$, $H(t,x)=(H_{1}(t,x),\cdots,H_{d}(t,x))$ are $d$-dimensional predictable processes.

\begin{theorem}[{It\^o's formula \cite[p. 66]{IW}}]\label{itoformula}
Let $(Z_{t})_{t\geq0}$ be given by \eqref{semimartingale} and $\varphi$ a $C^{2}$ function on $\R^{d}$. Assume that $G_{i}(t,x) H_{j}(t,x)=0$ for all $1\leq i,j\leq d$ and 
\begin{eqnarray*}
	\sup_{0\leq t\leq T}\sup_{x\in\R^{d}}|H(t,x)|<\infty
\end{eqnarray*} 
for all $T>0$,  almost surely. Then we have  
	\begin{eqnarray}\label{ito}
	\varphi(Z_{t})-\varphi(Z_{0})&=&\int_{0}^{t}\nabla \varphi(Z_{s})\cdot dM_{s}
		+\int_{0}^{t}\nabla \varphi(Z_{s})\cdot dA_{s}
	+\frac{1}{2}\int_{0}^{t}D^{2}\varphi(Z_{s})\cdot d[ M]_{s}\\
	&&+\int_{0}^{t}\int_{\R^{d}}\Paren{\varphi(Z_{s-}+G(s,x))-\varphi(Z_{s-})}N(ds, dx)\nonumber\\
	&&+\int_{0}^{t}\int_{\R^{d}}\Paren{\varphi(Z_{s-}+H(s,x))-\varphi(Z_{s-})}\widetilde{N}(ds, dx)\nonumber\\
	&&+\int_{0}^{t}\int_{\R^{d}}\Paren{\varphi(Z_{s-}+H(s,x))-\varphi(Z_{s-})-H(s,x)\cdot \nabla \varphi(Z_{s-})}\nu(dx)ds\nonumber
	\end{eqnarray}
	where $([M]_{t})_{t\geq0}$ is the quadratic variation of the martingale $(M_{t})_{t\geq0}$.
\end{theorem}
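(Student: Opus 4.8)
The plan is to derive \eqref{ito} from the general change-of-variables (Meyer) formula for $d$-dimensional semimartingales with jumps, specializing its abstract drift and jump terms to the explicit Poissonian structure in \eqref{semimartingale}. First I would record that $(Z_t)_{t\geq0}$ is a semimartingale whose continuous local-martingale part is exactly $M_t$ (so its quadratic variation equals $[M]_t$, since $A_t$ and the compensator drift are continuous and of bounded variation and hence contribute nothing to the bracket), whose continuous finite-variation part is $A_t$ together with the drift $-\int_0^t\int_{\R^d}H(s,x)\nu(dx)\,ds$ arising from writing $\widetilde N=N-ds\,\nu$, and whose jump at a jump time $s$ is $\Delta Z_s=G(s,\Delta X_s)+H(s,\Delta X_s)$. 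The hypothesis $G_i(t,x)H_j(t,x)=0$ forces at most one of $G,H$ to be nonzero at any $(t,x)$, so the jump times split into disjoint families of ``$G$-jumps'' and ``$H$-jumps''.

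Next I would apply the general semimartingale It\^o formula
\begin{eqnarray*}
\varphi(Z_t)-\varphi(Z_0)
&=&\int_0^t\nabla\varphi(Z_{s-})\cdot dZ_s
+\frac12\int_0^t D^2\varphi(Z_{s-})\cdot d[M]_s\\
&&+\sum_{0<s\leq t}\Paren{\varphi(Z_s)-\varphi(Z_{s-})-\nabla\varphi(Z_{s-})\cdot \Delta Z_s},
\end{eqnarray*}
and substitute $dZ_s=dM_s+dA_s+\int_{\R^d}G(s,x)N(ds,dx)+\int_{\R^d}H(s,x)\widetilde N(ds,dx)$ into the first integral. The pieces $\int\nabla\varphi\cdot dM$ and $\int\nabla\varphi\cdot dA$ already match the first two terms of \eqref{ito}. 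For the jump sum I would split using $G_iH_j=0$: over the $G$-jumps the summand is $\varphi(Z_{s-}+G)-\varphi(Z_{s-})-\nabla\varphi\cdot G$, whose $-\nabla\varphi\cdot G$ term cancels against the leftover $\int\nabla\varphi(Z_{s-})\cdot G\,N(ds,dx)$, producing exactly the $N$-integral in \eqref{ito}; over the $H$-jumps the summand is $\varphi(Z_{s-}+H)-\varphi(Z_{s-})-\nabla\varphi\cdot H$, which I would write against $N$ and then replace $N=\widetilde N+ds\,\nu$. Combining the $\widetilde N$-piece with the remaining $\int\nabla\varphi(Z_{s-})\cdot H\,\widetilde N(ds,dx)$ collapses to $\int(\varphi(Z_{s-}+H)-\varphi(Z_{s-}))\widetilde N(ds,dx)$, while the $ds\,\nu$-piece is precisely the compensator integral in the last line of \eqref{ito}; this is also where the compensator drift in the continuous part is reabsorbed, so no stray terms survive.

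The main obstacle is justifying the convergence and integrability that make this regrouping legitimate, since the $H$-jumps may have infinite activity. The key estimate is the second-order Taylor bound $\abs{\varphi(z+H)-\varphi(z)-\nabla\varphi(z)\cdot H}\leq C\abs{H}^2$, valid because $\varphi\in C^2$ and, by $\sup_{t\leq T,\,x}\abs{H(t,x)}<\infty$, the relevant arguments remain in a bounded region; combined with $\int_{\R^d}(1\wedge\abs{y}^2)\nu(dy)<\infty$ this makes the compensator integral finite and the compensated integral against $\widetilde N$ a genuine square-integrable martingale, so the otherwise only conditionally summable jump series may be rearranged. To treat a general $C^2$ function $\varphi$ without globally bounded derivatives I would localize by stopping times reducing $Z$ to a bounded process, apply the identity there, and pass to the limit, invoking the disjoint-support hypothesis once more so that the finite-activity $G$-sum and the $H$-sum never interact.
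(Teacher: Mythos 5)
A point of orientation first: the paper does not prove Theorem \ref{itoformula} at all --- it is quoted verbatim from Ikeda--Watanabe \cite[p.~66]{IW}, and the paper's own work begins only with its applications. So your proposal can only be compared with the cited source. Your route --- invoke the general Meyer--It\^o change-of-variables formula for semimartingales, identify the continuous martingale part of $Z$ as $M$ so the bracket term is $[M]$, split the jump sum into $G$-jumps and $H$-jumps via $G_iH_j\equiv 0$, bound the $H$-jump Taylor remainders by $C\Abs{H}^2$, regroup against $N=\widetilde N+\nu\,ds$, and localize for general $C^2$ functions $\varphi$ --- is a correct and non-circular derivation, and it is genuinely different from the proof in \cite{IW}: there the formula is built from scratch by interlacing (finitely many ``large'' jumps are handled by telescoping and applying the continuous It\^o formula between consecutive jump times, and the compensated small-jump part is obtained by an $L^2$ truncation-and-limit argument). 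Your approach buys brevity at the price of assuming the strictly stronger general semimartingale theorem; the interlacing proof is self-contained, which is why it is the quotable reference. One tiny point you omit: the paper's formula \eqref{ito} has $\nabla\varphi(Z_s)$ in the $dM_s$ and $dA_s$ integrals where your master formula has $\nabla\varphi(Z_{s-})$; these agree because $M$ and $A$ are continuous, so $d[M]_s$ and $|dA_s|$ put no mass on the countable set of jump times.

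Two spots in your write-up need repair, though neither is fatal to the skeleton. (i) In your first paragraph you place in the continuous finite-variation part of $Z$ a drift $-\int_0^t\int_{\R^d}H(s,x)\,\nu(dx)\,ds$ ``arising from writing $\widetilde N=N-ds\,\nu$''. That splitting is illegitimate in general: one may well have $\int_0^t\int_{\R^d}\Abs{H(s,x)}\,\nu(dx)\,ds=\infty$, and this is exactly the situation in the paper's application, where $H(s,x)=x\indi{B_1}(x)$ and $\nu$ has infinite mass near the origin. What you actually need is only that $\int H\,d\widetilde N$ is a \emph{purely discontinuous} local martingale, hence contributes nothing to the continuous bracket, giving $[Z]^c=[M]$. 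Fortunately your second paragraph never uses the offending splitting --- you split $N=\widetilde N+\nu\,ds$ only against the second-order remainder, which is $O(\Abs{H}^2)$ and absolutely integrable --- so the closing remark about the ``compensator drift being reabsorbed'' refers to a term that was never legitimately there and should be deleted. (ii) The finiteness you need, namely $\int_0^t\int_{\R^d}\Abs{H(s,x)}^2\,\nu(dx)\,ds<\infty$ locally, does \emph{not} follow from boundedness of $H$ together with $\int(1\wedge\Abs{y}^2)\,\nu(dy)<\infty$ (consider $H\equiv 1$ with $\nu(\R^d)=\infty$); it is part of what it means for the $\widetilde N$-integral in \eqref{semimartingale} to be defined (the class $F^{2,\mathrm{loc}}_p$ in \cite{IW}), so it should be invoked as a standing hypothesis on $H$ rather than derived from the L\'evy measure condition.
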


Throughout this paper, we assume that $(X_{t})_{t\geq0}$ is a pure jump L\'{e}vy process with  L\'{e}vy measure $\nu$ and that it  
satisfies the \emph{Hartman-Wintner condition}
\begin{equation}\label{HWcondition}\tag{HW}
	\lim_{|\xi|\to\infty}\frac{\Re(\psi(\xi))}{\log (1+ |\xi|)}=\infty.
\end{equation}
In \cite[Theorem 1]{KS}, V. Knopova and R. L. Schilling proved  that a L\'evy process $(X_{t})_{t\geq0}$ satisfies \eqref{HWcondition} if and only if  for all $t>0$, the transition density $p_{t}(x)$ exists, $p_{t}\in C^{\infty}(\R^{d})\cap L^{1}(\R^{d})$, and $\nabla p_{t}\in L^{1}(\R^{d})$. 
We define a semigroup $P_{t}f(x)=\E^{x}[f(X_{t})]$. It follows from the existence  of $p_{t}(x)$ that the semigroup $P_{t}$ is a $L^{p}$-contraction for $1\leq p\leq\infty$, meaning that $\norm{P_{t}f}_{p}\leq \norm{f}_{p}$,  for any $f\in L^{p}$. 
We also note that $P_{t}f\in L^{p}(\R^{d})\cap C^{\infty}(\R^{d})$ for $f\in L^{p}(\R^{d})$ and $1\leq p<\infty$. 
We define the infinitesimal generator $\cL$ for the semigroup $(P_{t})_{t\geq 0}$ by
\begin{eqnarray*}
	\cL f(x)=\lim_{t\downarrow 0}\frac{P_{t}f(x)-f(x)}{t}
\end{eqnarray*}
whenever the limit exists. It is well-known (see \cite[Theorem 31.5]{Sato}) that 
\begin{eqnarray}\label{infinitesimalgenerator}
	\cL f(x)=\int_{\R^{d}}(f(x+y)-f(x)-y\cdot \nabla f(x)\cdot \indi{B_{1}}(y))\nu(dy).
\end{eqnarray}

Finally, we recall the definition of a Fourier multiplier. Let $m:\R^{n}\to\C$ be a function in $L^{\infty}$ and $1\leq p\leq \infty$. For $f\in L^{2}\cap L^{p}$, we define an operator $T_{m}$ by $\wh{T_{m}f}(\xi)=m(\xi)\wh{f}(\xi)$. If $\norm{T_{m}f}_{p}\lesssim \norm{f}_{p}$ for all $f\in L^{2}\cap L^{p}$, then $T_{m}$ can be extended to all of  $L^{p}$ uniquely. We say $T_{m}$ is an {\it $L^p$-Fourier multiplier operator with symbol $m$}. For many of the classical examples of  $L^{p}$-Fourier multipliers, we refer the reader to \cite{Stein}.

\section{The Hardy-Stein Identity}

The purpose of this section is to give a proof of the Hardy-Stein identity based on It\^o's formula.
For $a,b\in\R$, $\varep>0$, and $p\in[1,\infty)$, we define
\begin{eqnarray}\label{eq:F}
	F(a,b;p)=|b|^{p}-|a|^{p}-pa|a|^{p-2}(b-a)
\end{eqnarray}  
and
\begin{eqnarray}\label{F-epsilon}
	F_{\varep}(a,b;p)
	=(b^{2}+\varep^{2})^{\frac{p}{2}}-(a^{2}+\varep^{2})^{\frac{p}{2}}
	-pa(a^{2}+\varep^{2})^{\frac{p-2}{2}}(b-a).
\end{eqnarray}
We note that $F(a,b;p)$ and $F_{\varep}(a,b;p)$ are the second-order Taylor reminders of the maps $x\mapsto |x|^{p}$  and $x\mapsto (x^{2}+\varep^{2})^{\frac{p}{2}}$ respectively. 
Since the maps are convex, it follows from Taylor's theorem that $F(a,b;p)\geq0$ and $F_{\varep}(a,b;p)\geq0$ for any $a,b\in\R$. 
\begin{theorem}[The Hardy-Stein identity]\label{taylorexpansion}
	Let $1<p<\infty$ and $F(a,b;p)$ be defined as in \eqref{eq:F}. If $f\in L^{p}(\R^{d})$, then we have
	\begin{eqnarray}\label{maineq}
		\int_{\R^{d}}|f(x)|^{p}dx
		=\int_{\R^{d}}\int_{0}^{\infty}\int_{\R^{d}}F(P_{t}f(x),P_{t}f(x+y);p)\nu (dy)dtdx.
	\end{eqnarray}
\end{theorem}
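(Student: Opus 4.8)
The plan is to transfer to the jump setting the It\^o-formula proof given for Brownian motion in \cite{Ban}. Since $a\mapsto|a|^{p}$ fails to be $C^{2}$ at the origin when $1<p<2$, I would work with the smooth approximation $G_{\varep}(a)=(a^{2}+\varep^{2})^{p/2}-\varep^{p}$, which is $C^{2}$ on $\R$ with $G_{\varep}'(a)=pa(a^{2}+\varep^{2})^{(p-2)/2}$ and satisfies
\[
G_{\varep}(b)-G_{\varep}(a)-G_{\varep}'(a)(b-a)=F_{\varep}(a,b;p),
\]
the second-order Taylor remainder from \eqref{F-epsilon} (subtracting the constant $\varep^{p}$ does not affect this remainder). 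It is convenient to prove the identity first for $f$ bounded, say $f\in L^{1}\cap L^{\infty}$, and to recover a general $f\in L^{p}$ by approximation at the end.

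Fix $T>0$ and $x\in\R^{d}$, write $u(s,z)=P_{T-s}f(z)$, and consider $Y_{s}=u(s,X_{s})$ under $\P^{x}$. By the Markov property $Y_{s}=\E^{x}[f(X_{T})\mid\cF_{s}]$ is a martingale, and since $u$ solves the backward equation $\d_{s}u+\cL u=0$ with $\cL$ as in \eqref{infinitesimalgenerator}, the drift of $Y$ cancels and $Y$ reduces to the compensated jump martingale
\[
Y_{s}=P_{T}f(x)+\int_{0}^{s}\int_{\R^{d}}\Paren{u(r,X_{r-}+y)-u(r,X_{r-})}\wt{N}(dr,dy).
\]
Thus $Y$ has the form \eqref{semimartingale} with vanishing continuous and $N$-parts and with $H(r,y)=u(r,X_{r-}+y)-u(r,X_{r-})$, which is bounded because $f$, hence $u$, is bounded. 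Applying It\^o's formula, Theorem \ref{itoformula}, to $G_{\varep}(Y_{s})$, the continuous and $N$-integral terms drop; and since $Y_{s-}=u(s,X_{s-})$ and $Y_{s-}+H(s,y)=u(s,X_{s-}+y)$, the $\nu$-compensator term is exactly
\[
\int_{0}^{t}\int_{\R^{d}}F_{\varep}\Paren{P_{T-s}f(X_{s-}),\,P_{T-s}f(X_{s-}+y);p}\,\nu(dy)\,ds.
\]
Taking $\E^{x}$ annihilates the $\wt{N}$-martingale, and setting $t=T$ with $Y_{0}=P_{T}f(x)$ and $Y_{T}=f(X_{T})$ produces a pointwise-in-$x$ identity.

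Next I would integrate this identity in $x$ over $\R^{d}$. Using translation invariance of Lebesgue measure together with $\int_{\R^{d}}p_{s}(z)\,dz=1$, the term $\int_{\R^{d}}\E^{x}[G_{\varep}(f(X_{T}))]\,dx$ reduces to $\int_{\R^{d}}G_{\varep}(f)\,dx$, while the right-hand side collapses to $\int_{0}^{T}\int_{\R^{d}}\int_{\R^{d}}F_{\varep}(P_{T-s}f(w),P_{T-s}f(w+y);p)\,dw\,\nu(dy)\,ds$; substituting $r=T-s$ gives
\[
\int_{\R^{d}}G_{\varep}(f)\,dx-\int_{\R^{d}}G_{\varep}(P_{T}f)\,dx=\int_{0}^{T}\int_{\R^{d}}\int_{\R^{d}}F_{\varep}(P_{r}f(w),P_{r}f(w+y);p)\,dw\,\nu(dy)\,dr.
\]
Since $F_{\varep}\geq0$, letting $T\to\infty$ makes the right-hand side increase to the claimed triple integral by monotone convergence, while on the left one shows $\norm{P_{T}f}_{p}\to0$, which follows from $\norm{p_{T}}_{\infty}\to0$ (a consequence of \eqref{HWcondition} and \cite{KS}) on a dense class and then by contractivity; finally $\varep\to0$ replaces $G_{\varep}$ by $|\cdot|^{p}$ and $F_{\varep}$ by $F$ of \eqref{eq:F}, yielding \eqref{maineq}.

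The formal computation is straightforward; the main obstacle is its rigorous justification. One must verify that $Y$ is a genuine (not merely local) martingale, so that its expectation vanishes, and that the two appeals to Fubini are licit — the one on the right is unconditional since $F_{\varep}\geq0$. The delicate point is that $\int_{\R^{d}}G_{\varep}(f)\,dx$ need not be finite for fixed $\varep>0$ when $p<2$, so either the two left-hand terms must be treated as a single difference, or, more cleanly, the limit $\varep\to0$ should be taken in the pointwise-in-$x$ identity before integrating in $x$, so that only the integrable $|f|^{p}$ and $|P_{T}f|^{p}$ appear. Finally, the boundedness hypothesis in Theorem \ref{itoformula} is what forces the initial restriction to bounded $f$, and the concluding passage from this dense class to an arbitrary $f\in L^{p}$, via continuity of both sides of \eqref{maineq}, is the step requiring the most care.
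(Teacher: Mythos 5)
Your core computation coincides with the paper's: apply It\^o's formula (Theorem \ref{itoformula}) to $Y_s=P_{T-s}f(X_s)$, use the backward equation and \eqref{infinitesimalgenerator} to reduce $Y$ to a compensated jump martingale, identify the $\nu$-compensator term for $\varphi(Y)$ with the Taylor remainder $F_\varep$ (resp.\ $F$), take expectations, integrate in $x$, and pass to the limits $\varep\to0$ and $T\to\infty$. The genuine divergence is how the boundedness hypothesis on $H$ in Theorem \ref{itoformula} is secured. The paper never restricts to bounded $f$: by ultracontractivity (Lemma \ref{ultracontractivity}), $\norm{P_tf}_\infty\le C_t^{1/p}\norm{f}_p$ for every $f\in L^p$, so $H(s,y)=P_{T-s}f(X_{s-}+y)-P_{T-s}f(X_{s-})$ is uniformly bounded on $[0,T_0]$ for any $T_0<T$; It\^o's formula is run only up to $T_0$, and $T_0\uparrow T$ is then handled by monotone convergence (using $F\ge0$). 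You instead run the argument up to $t=T$, which forces the initial restriction to bounded $f$, and you propose to recover general $f\in L^p$ at the end ``via continuity of both sides of \eqref{maineq}.'' That final step is a genuine gap, not merely a step requiring care: the right-hand side functional
\begin{equation*}
\cR(f):=\int_{\R^{d}}\int_{0}^{\infty}\int_{\R^{d}}F(P_{t}f(x),P_{t}f(x+y);p)\,\nu(dy)\,dt\,dx
\end{equation*}
is not known, a priori, to be continuous under $L^p$-convergence. If $f_n\to f$ in $L^p$ with $f_n$ bounded, then $P_tf_n\to P_tf$ (even uniformly, by ultracontractivity), and Fatou's lemma gives only $\cR(f)\le\liminf_n\cR(f_n)=\norm{f}_p^p$. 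The reverse inequality has no dominating function: $F$ is a second-order Taylor remainder, so pointwise bounds such as $|f_n|\le|f|$ yield no pointwise comparison of the integrands (note $F(a,a;p)=0$ while $F(a_n,b_n;p)$ can be strictly positive nearby), and Lemma \ref{FKfunction} bounds $F$ only by $K$, which is equally uncontrollable. Asserting continuity of $\cR$ is circular, since $\cR(f)=\norm{f}_p^p$ is exactly what is being proved; and two-sided comparisons between $\cR(f)$ and $\norm{f}_p^p$ of square-function type are precisely what the paper says may fail without symmetry of $\nu$.

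The gap is fixable within your framework, but by a specific semigroup device rather than generic approximation: apply your bounded-case identity to $g=P_{t_0}f$ (bounded by Lemma \ref{ultracontractivity}, and in $L^p$; your proof only needs $g\in L^p\cap L^\infty$, not $g\in L^1$). By the semigroup property $P_tg=P_{t+t_0}f$, so the identity for $g$ reads
\begin{equation*}
\norm{P_{t_0}f}_p^p=\int_{\R^{d}}\int_{t_0}^{\infty}\int_{\R^{d}}F(P_{s}f(x),P_{s}f(x+y);p)\,\nu(dy)\,ds\,dx,
\end{equation*}
and letting $t_0\downarrow0$ finishes: the left side tends to $\norm{f}_p^p$ by strong continuity of $(P_t)$ on $L^p$, while the right side increases to $\cR(f)$ by monotone convergence. (Alternatively, adopt the paper's $T_0<T$ device and prove the identity for all $f\in L^p$ directly.) One smaller remark: your ``delicate point'' that $\int_{\R^d}G_\varep(f)\,dx$ might be infinite is resolved by your own normalization, since $(a^2+\varep^2)^{p/2}-\varep^p\le|a|^p$ for $1<p<2$; the paper makes the same observation via H\"older continuity of $x\mapsto x^{p/2}$, keeping the two left-hand terms as a single difference exactly as you suggest.
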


Again we note that our proof of this result does not require that $\nu$ is symmetric as is the case in \cite{BBL}.  Before we present the  proof of Theorem \ref{taylorexpansion}, we give the following lemmas.  The first lemma concerns basic properties of $F$ and $F_{\varep}$ which allow us to use a limiting argument when we consider the case $1<p<2$.  This lemma is proved in  \cite{BogdanBL}.

\begin{lemma}[{\cite[Lemma 6, p.198]{BogdanBL}}]\label{FKfunction}
	Let $p>1$, $F(a,b;p)=|b|^{p}-|a|^{p}-pa|a|^{p-2}(b-a)$, and $K(a,b;p)=(b-a)^{2}(|a|\vee|b|)^{p-2}$.
	Then we have 
	\begin{eqnarray*}
		c_{p} K(a,b;p)\leq F(a,b;p)\leq C_{p} K(a,b;p), 
	\end{eqnarray*}
	for some positive constants $c_{p}, C_{p}$ that depend only on $p$.
	If $1<p<2$, then we have
	\begin{eqnarray*}
		0\leq F_{\varep}(a,b;p)\leq \frac{1}{p-1}F(a,b;p) 
	\end{eqnarray*}
	for all $\varep>0$ and $a,b\in\R$.
\end{lemma}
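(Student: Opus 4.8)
The plan is to realize both $F$ and $F_{\varep}$ as second-order Taylor remainders in integral form, read off the one-sided inequalities from pointwise estimates on the relevant second derivatives, and obtain the two-sided comparison $F\asymp K$ by a homogeneity-and-compactness argument. Write $\phi(t)=|t|^{p}$ and $\psi(t)=(t^{2}+\varep^{2})^{p/2}$. For $p>1$ both are $C^{1}$ with absolutely continuous derivative (note $\phi'(t)=p\,\sgn(t)|t|^{p-1}$ is continuous and $\phi''(t)=p(p-1)|t|^{p-2}$ is locally integrable), so Taylor's theorem with integral remainder gives
\begin{align*}
	F(a,b;p)&=\int_{a}^{b}(b-t)\,\phi''(t)\,dt=p(p-1)\int_{a}^{b}(b-t)\,|t|^{p-2}\,dt,\\
	F_{\varep}(a,b;p)&=\int_{a}^{b}(b-t)\,\psi''(t)\,dt.
\end{align*}
Since $\int_{a}^{b}(b-t)g(t)\,dt\geq 0$ for every $g\geq 0$ (the weight $(b-t)$ and the orientation of the integral carry matching signs whether $a<b$ or $a>b$), and $\phi''\geq 0$, $\psi''\geq 0$, this already recovers $F\geq 0$ and $F_{\varep}\geq 0$.

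For the second inequality ($1<p<2$) I would argue pointwise on the integrands. A direct computation gives
\[
	\psi''(t)=p\,(t^{2}+\varep^{2})^{\frac{p-4}{2}}\bigl[(p-1)t^{2}+\varep^{2}\bigr].
\]
Since $1<p<2$ we have $(p-1)t^{2}+\varep^{2}\leq t^{2}+\varep^{2}$, and because $s\mapsto s^{(p-2)/2}$ is decreasing we have $(t^{2}+\varep^{2})^{(p-2)/2}\leq |t|^{p-2}$; combining these,
\[
	\psi''(t)\leq p\,(t^{2}+\varep^{2})^{\frac{p-2}{2}}\leq p\,|t|^{p-2}=\tfrac{1}{p-1}\,\phi''(t)
\]
for all $t$ and all $\varep>0$. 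Integrating the pointwise inequality $\tfrac{1}{p-1}\phi''-\psi''\geq 0$ against the nonnegative weight $(b-t)\,dt$ (in the same oriented sense as above) then yields $F_{\varep}\leq \tfrac{1}{p-1}F$.

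For the two-sided bound $c_{p}K\leq F\leq C_{p}K$ I would exploit that $F$ and $K$ are both continuous, nonnegative, and homogeneous of degree $p$ under positive scaling, so it suffices to compare them on the unit circle $a^{2}+b^{2}=1$. There $K$ vanishes exactly on the diagonal $a=b$, i.e.\ at $\pm(1/\sqrt2,1/\sqrt2)$, and $F$ vanishes there too while being strictly positive elsewhere (off the diagonal the integrand $(b-t)\phi''(t)$ has a strict sign on a set of positive measure). Hence $F/K$ is continuous and strictly positive on the circle minus those two points, and the only remaining issue is the diagonal limit: a second-order Taylor expansion of $|t|^{p}$ about $a\neq 0$ gives $F(a,b;p)\sim \tfrac{p(p-1)}{2}|a|^{p-2}(b-a)^{2}$ while $K(a,b;p)\sim |a|^{p-2}(b-a)^{2}$, so $F/K\to \tfrac{p(p-1)}{2}>0$. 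Thus $F/K$ extends continuously and positively to the whole compact circle and is pinched between two positive constants $c_{p},C_{p}$.

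The main obstacle is this last step, namely certifying that the comparison constants are genuinely uniform. The compactness reduction concentrates the difficulty at the diagonal, so the crux is the diagonal Taylor limit together with the verification that $F>0$ off the diagonal in every sign configuration; the delicate case is the opposite-sign one $a<0<b$, where the singularity of $|t|^{p-2}$ sits inside the interval of integration when $1<p<2$. This singularity is harmless since $|t|^{p-2}$ is integrable and $F$ is manifestly continuous as the algebraic expression $|b|^{p}-|a|^{p}-p\,\sgn(a)|a|^{p-1}(b-a)$; and here one can even compute $F(a,b;p)=|b|^{p}+(p-1)|a|^{p}+p\,|a|^{p-1}|b|$ explicitly, from which $K\asymp(|a|\vee|b|)^{p}\asymp F$ follows directly, making the bound transparent in the configuration that at first looks worst.
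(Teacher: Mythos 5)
Your proof is correct, but note first that the paper does not actually prove this lemma: it is stated as a quotation of \cite[Lemma 6, p.198]{BogdanBL} with the remark ``This lemma is proved in \cite{BogdanBL}'', so your argument is a genuine self-contained substitute rather than a variant of an internal proof. All three steps check out: (i) for $1<p<2$ the derivative $\phi'(t)=p\,\sgn(t)|t|^{p-1}$ is absolutely continuous with $\phi''(t)=p(p-1)|t|^{p-2}$ locally integrable, so the integral form of the Taylor remainder is legitimate despite the singularity at $0$, and your orientation argument correctly yields $F\geq 0$ and $F_{\varep}\geq 0$ whether $a<b$ or $a>b$; (ii) the computation $\psi''(t)=p(t^{2}+\varep^{2})^{(p-4)/2}\bigl[(p-1)t^{2}+\varep^{2}\bigr]\leq p(t^{2}+\varep^{2})^{(p-2)/2}\leq p|t|^{p-2}=\frac{1}{p-1}\phi''(t)$ is exact for $1<p<2$ and integrates correctly against the signed weight $(b-t)$; (iii) since $F$ and $K$ are continuous, strictly positive off the diagonal ($F$ by strict convexity of $|t|^{p}$ for $p>1$), and both positively homogeneous of degree $p$, restriction to the unit circle together with the diagonal limit $F/K\to p(p-1)/2$ at $\pm(1/\sqrt2,1/\sqrt2)$ (where the base point is nonzero, so $\phi''$ is continuous on the relevant intervals) does pin $F/K$ between positive constants, with the origin handled trivially. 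The comparison with the cited source: the proof in \cite{BogdanBL} works with the equivalent parametrized remainder $F(a,b;p)=p(p-1)(b-a)^{2}\int_{0}^{1}(1-s)|a+s(b-a)|^{p-2}\,ds$ and estimates the integral by direct case analysis, which yields in-principle explicit constants $c_{p},C_{p}$, whereas your compactness argument is shorter but non-constructive --- harmless here, since the present paper only ever uses the existence of such constants. Your closed-form identity $F(a,b;p)=|b|^{p}+(p-1)|a|^{p}+p|a|^{p-1}|b|$ for $a<0<b$ is also correct and neatly disposes of the one configuration where the singularity of $|t|^{p-2}$ sits inside the interval of integration.
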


Next lemma is an application of It\^o's formula. 

\begin{lemma}\label{martingaleformula}
Let $T>0$ and $1\leq p<\infty$.  For $f\in L^{p}(\R^{d})$ and $t\in[0,T]$, define $P_{t}f(x)=\E^{x}[f(X_{t})]$ and $Y_{t}=P_{T-t}f(X_{t})$. Then we have
\begin{eqnarray*}
	Y_{t}=Y_{0}+\int_{0}^{t}\int_{\R^{d}}(P_{T-s}f(X_{s-}+y)-P_{T-s}f(X_{s-}))\wt{N}(ds,dy)
\end{eqnarray*}
where $t\in[0,T]$ and $\wt{N}$ is the compensated jump measure of $(X_{t})_{t\geq0}$.
\end{lemma}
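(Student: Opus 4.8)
The plan is to identify $Y_t=u(t,X_t)$ with $u(t,x):=P_{T-t}f(x)$ and to apply It\^o's formula, exploiting that $u$ is space--time harmonic for $(X_t)_{t\ge0}$. Since the semigroup satisfies $\partial_s P_sf=\cL P_sf$, the function $u$ solves the backward equation
\begin{equation}\label{backward}
	\partial_t u(t,x)=-\cL u(t,x),\qquad 0\le t<T.
\end{equation}
By the Hartman--Wintner condition \eqref{HWcondition}, $P_sf\in C^{\infty}(\R^{d})\cap L^{p}(\R^{d})$ for every $s>0$, so $u(s,\cdot)\in C^{2}$ for $s<T$ and $\cL u(s,\cdot)$ admits the integral representation \eqref{infinitesimalgenerator}. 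These two facts are what make the computation below legitimate.

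First I would fix $t<T$ and apply the time--dependent (space--time) form of It\^o's formula from Theorem \ref{itoformula} to $u(s,X_s)$ on $[0,t]$. Because $(X_t)_{t\ge0}$ is a pure jump L\'evy process, in the decomposition \eqref{semimartingale} one may take $M_s\equiv0$, $A_s\equiv0$, $G(s,x)=x\indi{\{|x|\ge1\}}$ and $H(s,x)=x\indi{\{|x|<1\}}$; the hypotheses $G_iH_j=0$ and $\sup_{s,x}|H(s,x)|\le1$ then hold trivially. The continuous, bounded--variation time variable contributes the term $\int_0^t\partial_s u(s,X_{s-})\,ds$, while the absence of a Gaussian part removes the $dM$ and $d[M]$ terms. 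Thus It\^o's formula yields
\begin{eqnarray*}
	Y_t-Y_0&=&\int_0^t\partial_s u(s,X_{s-})\,ds
	+\int_0^t\!\!\int_{|x|\ge1}\big(u(s,X_{s-}+x)-u(s,X_{s-})\big)\,N(ds,dx)\\
	&&+\int_0^t\!\!\int_{|x|<1}\big(u(s,X_{s-}+x)-u(s,X_{s-})\big)\,\wt{N}(ds,dx)\\
	&&+\int_0^t\!\!\int_{|x|<1}\big(u(s,X_{s-}+x)-u(s,X_{s-})-x\cdot\nabla u(s,X_{s-})\big)\,\nu(dx)\,ds.
\end{eqnarray*}

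Next I would substitute \eqref{backward} and \eqref{infinitesimalgenerator} for $\partial_s u$ and split the resulting $\nu$-integral into the regions $\{|x|<1\}$ and $\{|x|\ge1\}$. The $\{|x|<1\}$ part of $-\cL u$ cancels the last $\nu(dx)\,ds$ term of the preceding display exactly, leaving only $-\int_0^t\!\int_{|x|\ge1}\big(u(s,X_{s-}+x)-u(s,X_{s-})\big)\,\nu(dx)\,ds$. This compensator combines with the $N(ds,dx)$ term over $\{|x|\ge1\}$ into an integral against $\wt{N}=N-\nu\,ds$, and merging it with the remaining compensated integral over $\{|x|<1\}$ recombines the two regions into a single integral of $u(s,X_{s-}+x)-u(s,X_{s-})=P_{T-s}f(X_{s-}+x)-P_{T-s}f(X_{s-})$ against $\wt{N}$ over all of $\R^{d}$. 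This is exactly the asserted identity for $t<T$.

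The main obstacle is analytic rather than algebraic: one must justify differentiating under the semigroup to obtain \eqref{backward}, confirm that $u(s,\cdot)$ lies in the domain of $\cL$ with \eqref{infinitesimalgenerator} valid pointwise, and check the integrability needed both to apply It\^o's formula and to make the compensated integral a bona fide martingale rather than merely a local one; all of this rests on the smoothing supplied by \eqref{HWcondition}. The only remaining delicate point is the endpoint $t=T$, where $u(T,\cdot)=f$ need not be smooth. I would establish the identity first for $t<T$ as above and then pass to the limit $t\uparrow T$, using the right--continuity of $Y$ and the $L^{p}$--continuity of $s\mapsto P_{T-s}f$ to identify both sides at $t=T$.
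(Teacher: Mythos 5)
Your proposal is correct and follows essentially the same route as the paper's proof: the same choices $G(s,x)=x\indi{B_{1}^{c}}(x)$, $H(s,x)=x\indi{B_{1}}(x)$ in It\^o's formula for the pure jump process, the backward equation $\partial_{s}P_{T-s}f=-\cL P_{T-s}f$ combined with the representation \eqref{infinitesimalgenerator}, and the identical cancellation of the small-jump compensator followed by recombination of the large-jump terms into a single $\wt{N}$-integral. The additional care you take with the endpoint $t=T$ and with integrability is a reasonable refinement of points the paper passes over silently, but it does not change the argument.
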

\begin{proof}
	Let $\varphi(s,x)=P_{T-s}f(x)$.  This is a $C^2$ function in $x$ by the assumption \eqref{HWcondition}.  Set $G(s,x)=x \indi{B_{1}^{c}}(x)$ and $H(s,x)=x \indi{B_{1}}(x)$, and apply It\^{o}'s formula \eqref{ito}  to get
	\begin{eqnarray*}
		\varphi(t,X_{t})-\varphi(0,X_{0})
		&=&\int_{0}^{t}\parder{}{s}\varphi(s,x)|_{x=X_{s-}}ds\\
		&&+\int_{0}^{t}\int_{|y|\geq 1}\Paren{\varphi(s, X_{s-}+y)-\varphi(s,X_{s-})}N(ds, dy)\\
		&&+\int_{0}^{t}\int_{|y|<1}\Paren{\varphi(s,X_{s-}+y)-\varphi(s,X_{s-})}\widetilde{N}(ds, dy)\\
		&&+\int_{0}^{t}\int_{|y|<1}\Paren{\varphi(s,X_{s-}+y)-\varphi(s,X_{s-})-y\cdot \nabla \varphi(s, X_{s-})}\nu(dy)ds.
	\end{eqnarray*}
	Since we have $\parder{}{s}\varphi(s,x)=-\cL P_{T-s}f(x)$, it follows from  \eqref{infinitesimalgenerator} that 
	\begin{eqnarray*}
		\int_{0}^{t}\parder{}{s}\varphi(s,x)|_{x=X_{s-}}ds
		&=&-\int_{0}^{t}\cL P_{T-s}f(X_{s-})ds\\
		&=&-\int_{0}^{t}\int_{\R^{d}}\Paren{
			\varphi(s,X_{s-}+y)-\varphi(s,X_{s-})-y\cdot \nabla \varphi(s, X_{s-})\cdot \indi{B_{1}}(y)
			}\nu(dy)ds\\
		&=&-\int_{0}^{t}\int_{|y|<1}\Paren{\varphi(s,X_{s-}+y)-\varphi(s,X_{s-})-y\cdot \nabla \varphi(s, X_{s-})}\nu(dy)ds\\
			&&-\int_{0}^{t}\int_{|y|\geq 1}\Paren{\varphi(s, X_{s-}+y)-\varphi(s,X_{s-})}\nu(dy)ds.
	\end{eqnarray*}
	From $\wt{N}(ds,dy)=N(ds,dy)-\nu(dy)ds$, we have
	\begin{eqnarray*}
		\varphi(t,X_{t})-\varphi(0,X_{0})
		=\int_{0}^{t}\int_{\R^{d}}\Paren{\varphi(s, X_{s-}+y)-\varphi(s,X_{s-})}\wt{N}(ds, dy) 
	\end{eqnarray*}
	as desired.
\end{proof}

\begin{lemma}\label{ultracontractivity}
	The semigroup $P_{t}$ defined by $P_{t}f(x)=\E^{x}[f(X_{t})]$ is ultracontractive on $L^p$, $1\leq p<\infty$.   That is, for every $t>0$, there exists a constant $C_{t}>0$ such that for all $f\in L^{p}(\R^{d})$,
	\begin{eqnarray}\label{eq:ultracontractive}
		\norm{P_{t}f}_{\infty}\leq C_{t}^{\frac{1}{p}}\norm{f}_{p}. 
	\end{eqnarray}
	Furthermore, $C_{t}$ converges to zero as $t$ tends to $\infty$.
\end{lemma}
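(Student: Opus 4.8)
The plan is to represent $P_{t}$ as convolution against the transition density and then combine a Hölder inequality with a Fourier estimate for $p_{t}$. Under \eqref{HWcondition}, the Knopova--Schilling theorem quoted above provides a density $p_{t}\in C^{\infty}(\R^{d})\cap L^{1}(\R^{d})$ with $\int_{\R^{d}}p_{t}(z)\,dz=1$, and since $X_{t}$ starts afresh from $x$ we have $P_{t}f(x)=\E^{x}[f(X_{t})]=\int_{\R^{d}}f(x+z)p_{t}(z)\,dz$. The first point to establish is $\norm{p_{t}}_{\infty}<\infty$. Writing $p_{t}$ through Fourier inversion and using $|\wh{p_{t}}(\xi)|=e^{-t\Re(\psi(\xi))}$ (here $\Re(\psi)$ is even, so the sign of $\xi$ is irrelevant), one gets
\[
\norm{p_{t}}_{\infty}\le (2\pi)^{-d}\int_{\R^{d}}e^{-t\Re(\psi(\xi))}\,d\xi .
\]
Condition \eqref{HWcondition} makes $\Re(\psi(\xi))$ grow faster than any multiple of $\log(1+|\xi|)$, so for $t>0$ the integrand is eventually dominated by an integrable power $(1+|\xi|)^{-N}$; hence the integral converges and $\norm{p_{t}}_{\infty}<\infty$.

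For the inequality \eqref{eq:ultracontractive} I would split $p_{t}=p_{t}^{1/p}p_{t}^{1/p'}$ with $\tfrac1p+\tfrac1{p'}=1$ and apply Hölder:
\[
|P_{t}f(x)|\le \int_{\R^{d}}|f(x+z)|\,p_{t}(z)\,dz\le\Paren{\int_{\R^{d}}|f(x+z)|^{p} p_{t}(z)\,dz}^{1/p}\Paren{\int_{\R^{d}}p_{t}(z)\,dz}^{1/p'}.
\]
Since $\int p_{t}=1$ and $\int |f(x+z)|^{p} p_{t}(z)\,dz\le \norm{p_{t}}_{\infty}\norm{f}_{p}^{p}$, this yields $\norm{P_{t}f}_{\infty}\le\norm{p_{t}}_{\infty}^{1/p}\norm{f}_{p}$, i.e. \eqref{eq:ultracontractive} with $C_{t}=\norm{p_{t}}_{\infty}$.

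It remains to show $C_{t}\to0$. From the bound above, $\norm{p_{t}}_{\infty}$ is controlled by $(2\pi)^{-d}\int e^{-t\Re(\psi(\xi))}\,d\xi$, which is non-increasing in $t$; dominating by the integrable function $e^{-t_{0}\Re(\psi)}$ for a fixed $t_{0}>0$ and letting $t\to\infty$, dominated convergence gives the limit $(2\pi)^{-d}\leb(Z)$, where $Z=\{\xi:\Re(\psi(\xi))=0\}$. The crux is that $\leb(Z)=0$. Since $\Re(\psi(\xi))=\int_{\R^{d}}(1-\cos(\xi\cdot y))\,\nu(dy)\ge0$, the set $Z$ consists of those $\xi$ with $\xi\cdot y\in2\pi\Z$ for $\nu$-a.e.\ $y$; this makes $Z$ a closed additive subgroup of $\R^{d}$, because if $\xi_{1},\xi_{2}\in Z$ then $(\xi_{1}\pm\xi_{2})\cdot y\in2\pi\Z$ for $\nu$-a.e.\ $y$. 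A closed subgroup of $\R^{d}$ of positive Lebesgue measure must be all of $\R^{d}$, and \eqref{HWcondition} forces $\Re(\psi(\xi))\to\infty$, so $Z\neq\R^{d}$; hence $\leb(Z)=0$ and $C_{t}\to0$.

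The main obstacle is this last step. The finiteness of $\norm{p_t}_\infty$ and the Hölder bound are routine, but the decay of $C_{t}$ hinges on the zero set of $\Re(\psi)$ being Lebesgue-null, and a bare continuity argument is insufficient since continuous functions can vanish on sets of positive measure. The additive-group structure of $Z$, which comes from the special form $\int(1-\cos(\xi\cdot y))\,\nu(dy)$ of $\Re(\psi)$, is what makes the argument go through.
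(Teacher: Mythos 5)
Your proof is correct, and its core runs along the same lines as the paper's: Fourier inversion plus the Hartman--Wintner condition \eqref{HWcondition} to get $\norm{p_{t}}_{\infty}\leq (2\pi)^{-d}\int e^{-t\Re(\psi(\xi))}d\xi<\infty$, then a Jensen/H\"older step to convert the pointwise density bound into $\norm{P_{t}f}_{\infty}\leq C_{t}^{1/p}\norm{f}_{p}$, and finally dominated convergence (with $e^{-t_{0}\Re\psi}$ as the dominating function) to send $C_{t}\to0$. The one place where you genuinely diverge is the crucial fact that $Z=\{\xi:\Re(\psi(\xi))=0\}$ is Lebesgue-null: the paper simply cites \S3 of the Ba\~nuelos--Bogdan reference \cite{BanBog} for this, whereas you prove it from scratch. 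Your argument is sound: since $\Re(\psi(\xi))=\int(1-\cos(\xi\cdot y))\,\nu(dy)$ with a nonnegative integrand, $\xi\in Z$ exactly when $\xi\cdot y\in2\pi\Z$ for $\nu$-a.e.\ $y$; the exceptional null sets for $\xi_{1},\xi_{2}\in Z$ can be united, so $Z$ is an additive subgroup (closed, by continuity of $\psi$); by the Steinhaus theorem a measurable subgroup of positive measure contains a neighborhood of $0$ and hence equals $\R^{d}$; and \eqref{HWcondition} forces $\Re(\psi(\xi))\to\infty$, so $Z$ is bounded and therefore null. This buys you a self-contained proof where the paper leans on an external reference, at the cost of invoking Steinhaus; the paper's version is shorter but leaves the key measure-zero fact as a black box. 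You were also right to flag that plain continuity of $\Re(\psi)$ would not suffice here, and your observation that $\Re(\psi)$ is even correctly disposes of the sign ambiguity in $\wh{p_{t}}(\xi)=e^{-t\psi(-\xi)}$, a point the paper glosses over.
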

\begin{proof}  Although not explicitly written, this  result follows from \cite{KS}. Since its proof is quite simple, we present it here for completeness. To prove the inequality, fix  $t>0$. 
	Note that $e^{-t\psi(\xi)}=\E^{0}[e^{i\xi\cdot X_{t}}]=(2\pi)^{d}\sF^{-1}(p_{t}(\cdot))(\xi)$. Since $p_{t}$ is in $L^{1}(\R^{d})$, one sees that $e^{-t\psi(\xi)}$ belongs to $L^{\infty}(\R^{d})$. We claim that $e^{-t\psi(\xi)}$ is in $L^{1}(\R^{d})$.  To see this, it suffices to show that $e^{-t\Re\psi(\xi)}\in L^{1}(\R^{d})$. Let $h:\R^{d}\to\R$ be a function satisfying $\Re \psi(\xi)=\log (1+|\xi|)h(\xi)$. Since we have  $h(\xi)\to\infty$ as $|\xi|\to\infty$ by the Hartman-Wintner condition \eqref{HWcondition}, there exists $R>0$ such that $th(\xi)>d+1$ holds whenever $|\xi|\geq R$. Let $B_{R}$ be the ball centered at 0 and  radius $R$. Denote its Lebesgue measure by $|B_{R}|$. Using the definition of $h$, one sees that
	\begin{eqnarray*}
		\int_{\R^{d}\setminus B_{R}}e^{-t\Re\psi(\xi)}d\xi
		=\int_{|\xi|\geq R}\frac{1}{(1+|\xi|)^{th(\xi)}}d\xi
		\leq \int_{|\xi|\geq R}\frac{1}{(1+|\xi|)^{d+1}}d\xi
	\end{eqnarray*}	
	Since we have
	\begin{eqnarray}\label{eq:exprepsi}
		e^{-t\Re\psi(\xi)}
		=|e^{-t\psi(\xi)}|
		=\Abs{\int_{\R^{d}}e^{i\xi\cdot x}p_{t}(x)dx}
		\leq 1,
	\end{eqnarray}
	we obtain
	\begin{eqnarray*}
		\int_{\R^{d}}e^{-t\Re\psi(\xi)}d\xi
		\leq  \int_{|\xi|\geq R}\frac{1}{(1+|\xi|)^{d+1}}d\xi
			+|B_{R}|
		<\infty,
	\end{eqnarray*}
	which implies that $e^{-t\Re\psi(\xi)}\in L^{1}(\R^{d})$.
	Since $p_{t}\in L^{1}(\R^{d})\cap L^{\infty}(\R^{d})$, we apply the Fourier inversion formula to obtain
	\begin{eqnarray*}
		p_{t}(x)
		=\frac{1}{(2\pi)^{d}}\sF(e^{-t\psi(\xi)})
		=\frac{1}{(2\pi)^{d}}\int_{\R^{d}}e^{-t\psi(\xi)}e^{-ix\cdot \xi}d\xi.
	\end{eqnarray*}
	Define
	\begin{eqnarray*}
		C_{t}=\frac{1}{(2\pi)^{d}}\int_{\R^{d}}e^{-t\Re\psi(\xi)}d\xi, 
	\end{eqnarray*}
	then it is obvious to see that $C_{t}$ is finite and $|p_{t}(x)|\leq C_{t}$ for all $x\in\R^{d}$. Using Jensen's inequality, we obtain that 
	\begin{eqnarray*}
		|P_{t}f(x)|
		=\Abs{\int_{\R^{d}}f(y)p_{t}(x,y)dy}
		\leq \Abs{\int_{\R^{d}}|f(y)|^{p}p_{t}(x,y)dy}^{\frac{1}{p}}
		\leq C_{t}^{\frac{1}{p}}\norm{f}_{p}, 
	\end{eqnarray*}
	for any $x\in\R^{d}$, which yields \eqref{eq:ultracontractive}. 
	
	We now prove the second assertion that $C_{t}\to 0$ as $t\to\infty$. First, we note that $\Re\psi(\xi)$ is nonnegative by \eqref{eq:exprepsi} and in fact the Lebesgue measure of the set $\{\xi: \Re\psi(\xi)=0\}$ is zero (see \S3 in \cite{BanBog}).  Thus $e^{-t\Re\psi(\xi)}$ tends to 0, a.e., as $t\to\infty$. Since $e^{-t\Re\psi(\xi)}$ is integrable for all $t\geq 1$ and bounded by $e^{-\Re\psi(\xi)}$, it follows from the dominated convergence theorem that 
	\begin{eqnarray*}
		\lim_{t\to\infty}C_{t}
		=\lim_{t\to\infty}\frac{1}{(2\pi)^{d}}\int_{\R^{d}}e^{-t\Re\psi(\xi)}d\xi
		=0.
	\end{eqnarray*}
\end{proof}

We are ready to prove the Hardy-Stein identity.
\begin{proof}[Proof of Theorem \ref{taylorexpansion}]
We begin with the case $p\geq 2$. Fix $T>0$ and define $\varphi(x)=|x|^{p}$, $Y_{t}:=P_{T-t}f(X_{t})$, and $H(s,y)=P_{T-s}f(X_{s-}+y)-P_{T-s}f(X_{s-})$. Let $0<T_{0}<T$, then it follows from Lemma \ref{martingaleformula} that $Y_{t}$ can be written in the form
\begin{eqnarray*}
	Y_{t}=Y_{0}+\int_{0}^{t}\int_{\R^{d}}H(s,y)\wt{N}(ds,dy)
\end{eqnarray*}
for any $0\leq t\leq T_{0}$.
Note that $H(s,y)$ is uniformly bounded in $s\in[0,T_{0}]$ and $y\in\R^{d}$, by Lemma \ref{ultracontractivity}. Applying It\^{o}'s formula  to $\varphi(Y_{t})$, we obtain
\begin{eqnarray}\label{eq:proof3_1_1}
	\varphi(Y_{t})-\varphi(Y_{0})
	&=&\int_{0}^{t}\int_{\R^{d}}\Paren{\varphi(Y_{s-}+H(s,y))-\varphi(Y_{s-})}\widetilde{N}(ds, dy)\\
	&&+\int_{0}^{t}\int_{\R^{d}}\Paren{\varphi(Y_{s-}+H(s,y))-\varphi(Y_{s-})-H(s,y)\cdot \nabla \varphi(Y_{s-})}\nu(dy)ds\nonumber
\end{eqnarray}
for all $0\leq t\leq T_{0}$.
Note that $Y_{s-}+H(s,y)=P_{T-s}f(X_{s-}+y)$ and $Y_{s-}=P_{T-s}f(X_{s-})$. If we take expectation of both sides in \eqref{eq:proof3_1_1}, the first integral on the right hand side vanishes because it is a martingale. We note  that 
\begin{eqnarray*}
	&&\varphi(Y_{s-}+H(s,y))-\varphi(Y_{s-})-H(s,y)\cdot \nabla \varphi(Y_{s-})\\
	&=&|P_{T-s}f(X_{s-}+y)|^{p}-|P_{T-s}f(X_{s-})|^{p}\\
	&&-pP_{T-s}f(X_{s-})|P_{T-s}f(X_{s-})|^{p-2}(P_{T-s}f(X_{s-}+y)-P_{T-s}f(X_{s-}))\\
	&=& F(P_{T-s}f(X_{s-}+y), P_{T-s}f(X_{s-});p).
\end{eqnarray*} 
Putting $t=T_{0}$ and taking the expectation of both sides, we have
\begin{eqnarray}\label{eq:proof3_1_2}
	\E^{x}|Y_{T_{0}}|^{p}-\E^{x}|Y_{0}|^{p}
	=\Ebrx{x}{\int_{0}^{T_{0}}\int_{\R^{d}}F(P_{T-s}f(X_{s-}+y), P_{T-s}f(X_{s-});p)\nu(dy)ds}.
\end{eqnarray}
Since we have $\int_{\R^{d}}\E^{x}|Y_{T_{0}}|^{p}dx=\norm{P_{T-T_{0}}f}_{p}^{p}$ and $\int_{\R^{d}}\E^{x}|Y_{0}|^{p}dx=\norm{P_{T}f}_{p}^{p}$, we integrate both sides in \eqref{eq:proof3_1_2} to see
\begin{eqnarray*}
	\norm{P_{T-T_{0}}f}_{p}^{p}-\norm{P_{T}f}_{p}^{p}
	&=& \int_{\R^{d}}\Ebrx{x}{\int_{0}^{T_{0}}\int_{\R^{d}}F(P_{T-s}f(X_{s-}+y), P_{T-s}f(X_{s-});p)\nu(dy)ds}dx \\
	&=& \int_{\R^{d}}\int_{\R^{d}}\int_{0}^{T_{0}}\int_{\R^{d}}F(P_{T-s}f(z+y), P_{T-s}f(z);p)p_{s}(x,z)\nu(dy)dsdzdx.\\
	&=& \int_{\R^{d}}\int_{0}^{T_{0}}\int_{\R^{d}}F(P_{T-s}f(z+y), P_{T-s}f(z);p)\nu(dy)dsdz.
\end{eqnarray*}  
Since $F(a,b;p)$ is nonnegative and $\norm{P_{T-T_{0}}f}_{p}^{p}\to\norm{f}_{p}^{p}$ as $T_{0}\to T$, the monotone convergence theorem yields
\begin{eqnarray}\label{Ttoinfinity}
	\norm{f}_{p}^{p}-\norm{P_{T}f}_{p}^{p}
	&=& \int_{\R^{d}}\int_{0}^{T}\int_{\R^{d}}F(P_{s}f(z+y), P_{s}f(z);p)\nu(dy)dsdz.
\end{eqnarray}
By Lemma \ref{ultracontractivity}, one sees that $P_{T}f(x)\to 0$, as $T\to\infty$ for each $x\in\R^{d}$. This and the fact that  $\norm{P_{T}f}_{p}\leq \norm{f}_{p}$ for all $T>0$ gives that $\lim_{T\to\infty}\norm{P_{T}f}_{p}^{p}= 0$; since $F(a,b;p)$ is nonnegative, the right hand side of \eqref{Ttoinfinity} converges to the desired limit, which proves the result for $p\geq 2$.

We now consider the case $1<p<2$. Let $\varep>0$.  If we follow the same argument as in the case $p>2$ with the function $\varphi(x)=(|x|^{2}+\varep^{2})^{\frac{p}{2}}$, we arrive at
\begin{eqnarray*}\label{casepgreater2}
	\int_{\R^{d}}\Paren{(|f(x)|^{2}+\varep^{2})^{\frac{p}{2}}-(|P_{T}f(x)|^{2}+\varep^{2})^{\frac{p}{2}}}dx
	= \int_{\R^{d}}\int_{0}^{T}\int_{\R^{d}}F_{\varep}(P_{s}f(z+y), P_{s}f(z);p)\nu(dy)dsdz, 
\end{eqnarray*}
where $F_{\varep}$ is the function in \eqref{F-epsilon}. 

First, look at the left hand side of this identity.  Since the function $x\mapsto x^{\frac{p}{2}}$ is $\frac{p}{2}$-H\"{o}lder continuous on $[0,\infty)$ for $1<p<2$, we have $(|f(x)|^{2}+\varep^{2})^{\frac{p}{2}}-\varep^{p}\leq C_{p}|f(x)|^{p}$ and $(|P_{T}f(x)|^{2}+\varep^{2})^{\frac{p}{2}}-\varep^{p}\leq C_{p}|P_{T}f(x)|^{p}$.
Thus, the left hand side converges to $\norm{f}_{p}^{p}-\norm{P_{T}f}_{p}^{p}$ as $\varep\to 0$ by the dominated convergence theorem.  On the other hand,  $0\leq F_{\varep}(a,b;p)\to F(a,b;p)$, as $\varep\to 0$,  and $0\leq F_{\varep}(a,b;p)\leq \frac{1}{p-1}F(a,b;p)$, by Lemma \ref{FKfunction}. Since the integral
\begin{eqnarray*}
	I(\varep,T):=\int_{\R^{d}}\int_{0}^{T}\int_{\R^{d}}F_{\varep}(P_{s}f(z+y), P_{s}f(z);p)\nu(dy)dsdz
\end{eqnarray*}
is bounded for each $\varep>0$, Fatou's lemma and the dominated convergence theorem give (see \cite[p.199]{BogdanBL}) that
\begin{eqnarray*}
	I(\varep,T)\to\int_{\R^{d}}\int_{0}^{T}\int_{\R^{d}}F(P_{s}f(z+y), P_{s}f(z);p)\nu(dy)dsdz,
\end{eqnarray*}
as $\varep\to 0$. We now finish the proof by letting $T\to \infty$.
\end{proof}

One can easily see that the proof above gives the following more general result for martingales of which Theorem 
\ref{taylorexpansion} is a special case. 

\begin{theorem}[A Hardy-Stein identity for martingales]\label{martHardyStein} 
Let $H(t,x)$ be a predictable process. Suppose that for each $T>0$, $H$ is uniformly bounded in $t\in[0,T]$ and $x\in\R^{d}$. If a martingale $M_{t}$ is given by 
\begin{eqnarray*}
	M_{t}=M_{0}+\int_{0}^{t}\int_{\R^{d}}H(s,y)\wt{N}(ds,dy),
\end{eqnarray*}
then for  $1<p<\infty$, we have 
\begin{eqnarray}\label{marhardystein}
	\E|M_{\infty}|^{p}-\E|M_{0}|^{p}
	=\int_{0}^{\infty}\int_{\R^{d}}\E[F(M_{s-},M_{s-}+H(s,y);p)]\nu(dy)ds.
\end{eqnarray} 
\end{theorem}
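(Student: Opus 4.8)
My plan is to mirror the proof of Theorem \ref{taylorexpansion}, the only structural change being that It\^o's formula is now applied directly to the abstract martingale $M_t$ rather than to the specific choice $Y_t=P_{T-t}f(X_t)$; recall from Lemma \ref{martingaleformula} that the latter is itself a martingale of the form hypothesized here, so the two arguments are genuinely the same computation. I would first establish the identity for $p\geq 2$ using $\varphi(x)=|x|^p$, and then recover the range $1<p<2$ by the regularization $\varphi_\varep(x)=(x^2+\varep^2)^{p/2}$, passing to the limit $\varep\to0$ with the aid of the bound $0\le F_\varep\le\frac{1}{p-1}F$ from Lemma \ref{FKfunction} together with dominated convergence, precisely as in the closing paragraph of the proof of Theorem \ref{taylorexpansion}.

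Fix $p\ge 2$ and a finite $t$. In the semimartingale decomposition \eqref{semimartingale}, the (scalar) martingale $M_s$ has no continuous martingale part, no bounded-variation part, and no $N(ds,dy)$-integral, while its compensated-jump integrand is $H$; the hypotheses of Theorem \ref{itoformula} are met, since $G\equiv0$ makes the condition $G_iH_j=0$ automatic and $H$ is uniformly bounded on $[0,t]\times\R^d$ by assumption. Applying \eqref{ito} to $\varphi(M_s)=|M_s|^p$ therefore leaves only the $\wt N$-martingale term together with the compensator $\int_0^t\int_{\R^d}(\varphi(M_{s-}+H(s,y))-\varphi(M_{s-})-H(s,y)\varphi'(M_{s-}))\,\nu(dy)\,ds$, whose integrand equals $F(M_{s-},M_{s-}+H(s,y);p)$ by \eqref{eq:F}. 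Taking expectations annihilates the stochastic integral, giving $\E|M_t|^p-\E|M_0|^p=\int_0^t\int_{\R^d}\E[F(M_{s-},M_{s-}+H(s,y);p)]\,\nu(dy)\,ds$ for each finite $t$; a localization by $\tau_n=\inf\{s:|M_s|>n\}$ can be inserted to guarantee the $\wt N$-integral is a true (zero-mean) martingale before the stopping is removed via $F\ge0$.

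Finally I would let $t\to\infty$. Because $F\ge0$, the right-hand side is nondecreasing in $t$ and converges to the claimed integral by the monotone convergence theorem. The crux is the left-hand side, i.e.\ the convergence $\E|M_t|^p\to\E|M_\infty|^p$. If the limiting integral is finite, the finite-$t$ identity bounds $\sup_t\E|M_t|^p$, so $(M_t)$ is an $L^p$-bounded martingale; Doob's maximal inequality then gives $\sup_t|M_t|\in L^p$, hence $(|M_t|^p)$ is uniformly integrable, $M_t\to M_\infty$ a.s.\ and in $L^p$, and therefore $\E|M_t|^p\to\E|M_\infty|^p$. If instead the integral diverges, the same identity forces $\E|M_t|^p\to\infty$ and \eqref{marhardystein} holds with both sides equal to $+\infty$. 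I expect this moment convergence as $t\to\infty$, together with the integrability needed to make the compensated integral a genuine martingale, to be the only delicate points; both are controlled by the uniform boundedness of $H$ and the $L^p$-boundedness of $(M_t)$ just described.
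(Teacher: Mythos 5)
Your strategy coincides with the paper's: the paper gives no separate proof of Theorem \ref{martHardyStein} at all (it is stated as an immediate byproduct of the proof of Theorem \ref{taylorexpansion}), and your outline -- It\^o's formula for $\varphi(x)=|x|^p$ when $p\geq 2$, localization to make the compensated integral a true martingale, the regularization $\varphi_\varep$ with Lemma \ref{FKfunction} for $1<p<2$, and monotone convergence in $t$ -- is exactly that adaptation, carried out with more care than the paper's one-line remark. The finite case of your last step is also correct: if the right-hand side of \eqref{marhardystein} is finite, then $\sup_t \E|M_t|^p<\infty$, Doob's maximal inequality gives uniform integrability of $|M_t|^p$, and $\E|M_t|^p\to\E|M_\infty|^p$ follows. (You do implicitly use $\E|M_0|^p<\infty$ throughout; that should be stated.)

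The genuine gap is your divergent case. From $\E|M_t|^p\to\infty$ you cannot conclude $\E|M_\infty|^p=\infty$: loss of uniform integrability is precisely what is at stake, and the a.s.\ limit $M_\infty$ can even vanish. Concretely, let $\nu$ be the symmetric $\alpha$-stable L\'evy measure (so \eqref{HWcondition} holds), $A=[1,2]$, $\lambda=\nu(A)$, $M_0=1$, and $H(s,y)=\min(M_{s-},1)\indi{A}(y)$, which is predictable and bounded by $1$. Then $M$ is a positive martingale which jumps by $\min(M_{s-},1)$ at the Poisson($\lambda$) times at which $\Delta X_s\in A$ and drifts downward in between. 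As a nonnegative martingale it converges a.s.; since jumps of size $\min(M_{s-},1)$ recur at rate $\lambda$ forever, convergence forces $M_\infty=0$ a.s. If $M$ were bounded in $L^2$ it would be uniformly integrable and $\E M_\infty=\lim_t\E M_t=1$, a contradiction; hence $\E M_t^2\to\infty$, i.e.\ for $p=2$ (where $F(a,b;2)=(b-a)^2$) the right-hand side of \eqref{marhardystein} is $+\infty$, while the left-hand side equals $\E|M_\infty|^2-\E|M_0|^2=-1$. So in the divergent case the identity fails as literally stated, and no argument can close your gap; the defect lies in the statement itself. The theorem is true -- and your proof is then complete -- either under the additional hypothesis that the right-hand side of \eqref{marhardystein} is finite (equivalently, that $(M_t)$ is bounded in $L^p$), or with $\E|M_\infty|^p$ interpreted as $\lim_{t\to\infty}\E|M_t|^p$; in the special case $M_t=P_{T-t}f(X_t)$ of Theorem \ref{taylorexpansion} this issue never arises because the semigroup structure supplies the needed convergence.
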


\section{Fourier multipliers and square functions}

The main application of the results in \cite{BBL} were to show the boundedness of the Fourier multipliers introduced in  \cite{BBB} on $L^p$, $1<p<\infty$, without appealing to martingale transforms. Of course, a disadvantage of such a proof is that we do not obtain the sharp bounds given in \cite{BanBog} and  \cite{BBB}, which follow from Burkholder's sharp inequalities.  In addition, the Littlewood-Paley inequalities proved in \cite{BBL} only apply to symmetric L\'evy processes and therefore the Fourier multiplier proof given there also has this restriction. In this section, we provex, via symmetrization of the Littlewood-Paley inequalities, the general result for Fourier multipliers. 

Let $(X_{t})_{t\geq0}$ be a pure jump L\'evy process with L\'evy measure $\nu$ and $P_{t}$ be the semigroup defined by $P_{t}f(x)=\E^{x}[f(X_{t})]$.
Let $\phi:(0,\infty)\times \R^{d}\to\R$ be a bounded function and $1<p,q<\infty$ with $\frac{1}{p}+\frac{1}{q}=1$. We define $\Lambda_{\phi}(f,g)$ for $f\in L^{2}(\R^{d})\cap L^{p}(\R^{d})$ and $g\in L^{2}(\R^{d})\cap L^{q}(\R^{d})$ by
\begin{eqnarray}\label{eq:lambda}
	\Lambda_{\phi}(f,g)
	=\int_{\R^{d}}\int_{0}^{\infty}\int_{\R^{d}}
	(P_{t}f(x+y)-P_{t}f(x))(P_{t}g(x+y)-P_{t}g(x))\phi(t,y)
	\nu(dy)dtdx.
\end{eqnarray}
Let $m:\R^{d}\to\C$ be a function. The Fourier multiplier operator with symbol $m$ is denoted by $\cM_{m}$. Note that $\cM_{m}$ is determined by $\sF(\cM_{m} f)(\xi)=m(\xi)\wh{f}(\xi)$. For $f,g\in L^{2}(\R^{d})$, we denote by $\brk{f,g}=\int_{\R^{d}}f(x)g(x)dx$. By Parseval's formula \eqref{parseval}, we have
\begin{eqnarray*}
	\brk{\cM_{m}f,g}
	&=&\int_{\R^{d}}\cM_{m}f(x)g(x)dx
	=\frac{1}{(2\pi)^{d}}\int_{\R^{d}}\sF(\cM_{m}f)(\xi)\ol{\sF(g)(\xi)}d\xi\\
	&=&\frac{1}{(2\pi)^{d}}\int_{\R^{d}}m(\xi)\wh{f}(\xi)\ol{\wh{g}(\xi)}d\xi
\end{eqnarray*}
We are ready to state our result on Fourier multipliers. 
\begin{theorem}\label{Fouriermultiplier}
	Let $\phi:(0,\infty)\times \R^{d}\to\R$ be a bounded function, $p\in (1,\infty)$ and $q$ the conjugate exponent of $p$. Let $\Lambda_{\phi}(f,g)$ be defiend as in \eqref{eq:lambda}.
	Then, $\Lambda_{\phi}(f,g)$ is absolutely convergent for $f\in L^{2}(\R^{d})\cap L^{p}(\R^{d})$ and $g\in L^{2}(\R^{d})\cap L^{q}(\R^{d})$. Furthermore, there is a unique bounded linear operator $S_{\phi}$ on $L^{p}(\R^{d})$ such that $\Lambda_{\phi}(f,g)=\brk{S_{\phi}(f),g}$ and $S_{\phi}=\cM_{m_{\phi}}$ with symbol $m_{\phi}$ given by
	\begin{eqnarray*}
		m_{\phi}(\xi)
		=\int_{0}^{\infty}\int_{\R^{d}}
		|e^{i\xi\cdot y}-1|^{2}e^{-2t\Re(\psi(\xi))}\phi(t,y)\nu(dy)dt.
	\end{eqnarray*}
\end{theorem}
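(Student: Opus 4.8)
The plan is to reduce all three assertions to the single bilinear estimate
\begin{equation*}
\Abs{\Lambda_{\phi}(f,g)}\le C_{p}\norm{\phi}_{\infty}\norm{f}_{p}\norm{g}_{q},\qquad f\in L^{2}(\R^{d})\cap L^{p}(\R^{d}),\ g\in L^{2}(\R^{d})\cap L^{q}(\R^{d}).
\end{equation*}
Throughout I write $\cG h(x)=\Paren{\int_{0}^{\infty}\int_{\R^{d}}\Abs{P_{t}h(x+y)-P_{t}h(x)}^{2}\nu(dy)dt}^{1/2}$ for the Littlewood--Paley square function attached to $P_{t}$ and $\nu$. The analytic inputs are the Hardy--Stein identity (Theorem \ref{taylorexpansion}) at the two relevant exponents and Parseval's formula \eqref{parseval}.

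First I would settle absolute convergence, which is the $p=2$ instance of the whole scheme. Since $F(a,b;2)=(b-a)^{2}$, the identity \eqref{maineq} reads $\norm{h}_{2}^{2}=\int_{\R^{d}}\cG h(x)^{2}\,dx$, so $\cG$ is an isometry on $L^{2}$. Bounding $\Abs{\phi}\le\norm{\phi}_{\infty}$ and applying Cauchy--Schwarz in the measure $\nu(dy)dt$ (pointwise in $x$) and then Cauchy--Schwarz in $x$, the integral defining $\Lambda_{\phi}(f,g)$ is dominated in absolute value by $\norm{\phi}_{\infty}\norm{\cG f}_{2}\norm{\cG g}_{2}=\norm{\phi}_{\infty}\norm{f}_{2}\norm{g}_{2}<\infty$, which Tonelli's theorem makes rigorous and which yields the claimed absolute convergence.

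Next I would identify the symbol on the dense class $f,g\in L^{2}\cap(L^{p},L^{q})$. Since $\wh{P_{t}f}(\xi)=e^{-t\psi(\xi)}\wh{f}(\xi)$ and translation multiplies the transform by $e^{i\xi\cdot y}$, the transform of $x\mapsto P_{t}f(x+y)-P_{t}f(x)$ is $(e^{i\xi\cdot y}-1)e^{-t\psi(\xi)}\wh{f}(\xi)$. Applying \eqref{parseval} to the inner $x$-integral turns the product into $\Abs{e^{i\xi\cdot y}-1}^{2}e^{-2t\Re\psi(\xi)}\wh{f}(\xi)\ol{\wh{g}(\xi)}$, using $e^{-t\psi}\ol{e^{-t\psi}}=e^{-2t\Re\psi}$. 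The absolute convergence just proved justifies Fubini's theorem, and after exchanging the order of integration I obtain
\begin{equation*}
\Lambda_{\phi}(f,g)=\frac{1}{(2\pi)^{d}}\int_{\R^{d}}m_{\phi}(\xi)\wh{f}(\xi)\ol{\wh{g}(\xi)}\,d\xi=\brk{\cM_{m_{\phi}}f,g},
\end{equation*}
with $m_{\phi}$ as stated. That $m_{\phi}\in L^{\infty}$ with $\norm{m_{\phi}}_{\infty}\le\norm{\phi}_{\infty}$ follows from $\int_{\R^{d}}\Abs{e^{i\xi\cdot y}-1}^{2}\nu(dy)=2\Re\psi(\xi)$ and $\int_{0}^{\infty}e^{-2t\Re\psi(\xi)}dt=(2\Re\psi(\xi))^{-1}$, giving $L^{2}$ boundedness outright.

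The substance is the $L^{p}$ bound, and here symmetrization enters. By the two applications of Cauchy--Schwarz above, now closed with Hölder's inequality in $x$, one gets $\Abs{\Lambda_{\phi}(f,g)}\le\norm{\phi}_{\infty}\int_{\R^{d}}\cG f(x)\,\cG g(x)\,dx\le\norm{\phi}_{\infty}\norm{\cG f}_{p}\norm{\cG g}_{q}$, so it suffices to prove the one--sided inequality $\norm{\cG h}_{r}\le C_{r}\norm{h}_{r}$ for every $1<r<\infty$. This is the estimate whose full two--sided form needs symmetry in \cite{BBL}. To drop that hypothesis I would pass to the symmetric Lévy process with measure $\nu^{s}=\tfrac12(\nu+\ol{\nu})$, $\ol{\nu}(\cdot)=\nu(-\cdot)$, whose exponent is $\Re\psi$ (as $1-\cos(\xi\cdot y)$ is even), i.e.\ the process carrying the symbol $e^{-t\Re\psi}$ seen in $m_{\phi}$. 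Since $\nu\le 2\nu^{s}$, $\cG h$ is controlled by the symmetrized square function of \eqref{defsquareftn}--\eqref{symG*}, to which the non--symmetric Hardy--Stein identity (Theorem \ref{taylorexpansion}) and the comparison $F\approx K$ of Lemma \ref{FKfunction} drive the Littlewood--Paley argument exactly as symmetry does in \cite{BBL}, for all $1<r<\infty$. Granting this bilinear estimate, density of $L^{2}\cap L^{p}$ in $L^{p}$ and the duality $(L^{p})^{*}=L^{q}$ yield a unique bounded $S_{\phi}$ on $L^{p}$ with $\Lambda_{\phi}(f,g)=\brk{S_{\phi}f,g}$ and $\norm{S_{\phi}}\le C_{p}\norm{\phi}_{\infty}$; since $\brk{S_{\phi}f,g}=\brk{\cM_{m_{\phi}}f,g}$ on the dense class, $S_{\phi}=\cM_{m_{\phi}}$. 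I expect the main obstacle to be precisely this one--sided square--function inequality: the asymmetry of the measure is harmless via $\nu\le 2\nu^{s}$, but running Stein's duality argument when the two arguments of $F$ in \eqref{maineq} are not interchangeable is the delicate point that the symmetrization of the Littlewood--Paley function is designed to overcome.
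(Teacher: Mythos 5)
Your first two steps (absolute convergence via the $p=2$ Hardy--Stein identity, and the identification of the symbol $m_{\phi}$ via Parseval) are exactly the paper's argument. But the core step --- the $L^{p}$ bound in the non-symmetric setting, which is the whole point of the theorem --- contains a genuine gap, in two places. First, you reduce to the one-sided inequality $\norm{\cG h}_{r}\leq C_{r}\norm{h}_{r}$ for \emph{every} $1<r<\infty$, where $\cG$ is the full square function. This is a reduction to a statement that is not available even in the symmetric case: Lemma \ref{squarefunctions} (i.e.\ \cite{BBL}) bounds the full square function $\wt{G}$ only for $r\geq 2$, and for $r<2$ one must replace it by the restricted square function $\wt{G}_{\ast}$ of \eqref{symG*}, whose defining region $A(t,x,f)$ is precisely what makes the $r<2$ case work. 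Since your pairing $\norm{\cG f}_{p}\norm{\cG g}_{q}$ uses the full square function on both sides and $\min(p,q)<2$ unless $p=q=2$, the scheme cannot close. Second, your symmetrization via the domination $\nu\leq 2\wt{\nu}$ only symmetrizes the \emph{measure}, not the \emph{semigroup}: $\cG$ is built from increments of $P_{t}h$, while $\wt{G}$ in \eqref{defsquareftn} is built from increments of $\wt{P}_{t}h$, and there is no pointwise comparison between $|P_{t}h(x+y)-P_{t}h(x)|$ and $|\wt{P}_{t}h(x+y)-\wt{P}_{t}h(x)|$. So the object dominating $\cG h$ is not one to which \cite{BBL} applies, and the gap you yourself flag as ``the delicate point'' is left open.

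The paper closes this gap differently, and the mechanism is worth internalizing: there is no domination of square functions at all. Instead, one proves the \emph{exact identity} $\Lambda_{\phi}(f,g)=\wt{\Lambda}_{\eta}(f,g)$, where the right side is the bilinear form built entirely from the symmetrized semigroup $\wt{P}_{t}$ and the symmetrized measure $\wt{\nu}$. This works because on the Fourier side $\Lambda_{\phi}$ involves only $|e^{i\xi\cdot y}-1|^{2}e^{-2t\Re(\psi(\xi))}$, i.e.\ only the real part of $\psi$, which is the characteristic exponent of the symmetrized process; the asymmetry of $\nu$ is absorbed into a new weight $\eta(t,y)=\phi(t,y)(1+r(y))$, where $\ol{\nu}(dy)=r(y)\wt{\nu}(dy)$ with $\norm{r}_{\infty,\wt{\nu}}\leq 1$ (Lemma \ref{msrlemma}). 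Once everything lives over the symmetric process, the estimate is closed by the asymmetric pairing: taking $p>2$, $q<2$ without loss of generality, one uses the symmetry of $\wt{\nu}$ (change of variables $x\mapsto x+y$, $y\mapsto -y$) to restrict the $y$-integral to $A(t,x,g)$ at the cost of a factor $2$, which produces $\int\wt{G}(f)\,\wt{G}_{\ast}(g)\,dx$ --- the full square function on the $L^{p}$ side where $p>2$ is allowed, and the restricted one on the $L^{q}$ side where all $1<q<\infty$ is allowed. Your proposal gestures at these ingredients but assembles them in an order (dominate first, then symmetrize the measure only) in which they do not fit together.
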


When $\nu$ is symmetric, this result was proved in \cite{BBL} as an application of the boundedness on $L^p$ of the Littlewood-Paley square functions which itself was the main application of the Hardy-Stein inequality, completely bypassing the martingale transform arguments used earlier. The question left open in \cite{BBL} was whether  Littlewood-Paley arguments can be used to prove the result for general $\nu$.  We answer this in the affirmative. Our proof relies on the symmetrization technique. 

Let us introduce the dual process and the symmetrization of the L\'evy process $(X_{t})_{t\geq 0}$ with the L\'evy measure $\nu$. Let $(\wh{X}_{t})_{t\geq0}$ be a c\`{a}dl\`{a}g stochastic process having the same finite dimensional distribution as $(-X_{t})_{t\geq0}$, and independent of $(X_{t})_{t\geq0}$. The process $(\wh{X}_{t})_{t\geq0}$ is said to be the \emph{dual process} of $(X_{t})_{t\geq0}$. Note that $(\wh{X}_{t})_{t\geq0}$ is a L\'{e}vy process with triplet $(0,0,\nu(-dx))$. We define its semigroup by $\wh{P}_{t}f(x):=\E^{x}[f(\wh{X}_{t})]$. Note that for any Borel function $f$ and $g$, we have
\begin{eqnarray*}
	\int_{\R^{d}}P_{t}f(x)g(x)dx=\int_{\R^{d}}f(x)\wh{P}_{t}g(x)dx,
\end{eqnarray*}
which explains why $(\wh{X}_{t})_{t\geq0}$ is called the dual of $(X_{t})_{t\geq 0}$.

Let $\wt{X}_{t}:=X_{\frac{t}{2}}+\wh{X}_{\frac{t}{2}}$ for $t\geq 0$. We define $\wt{\psi}(\xi):=\Re(\psi(\xi))$ and $\wt{\nu}(B):=\frac{1}{2}(\nu(B)+\nu(-B))$ for any measurable set $B$ in $\R^{d}$. Since we have
\begin{eqnarray*}
	\E[e^{i\xi\cdot \wt{X}_{t}}]
	&=& \E[e^{i\xi\cdot X_{\frac{t}{2}}}]\E[e^{i\xi\cdot \wh{X}_{\frac{t}{2}}}]\\
	&=& e^{-\frac{t}{2}\psi(\xi)}e^{-\frac{t}{2}\psi(-\xi)}\\
	&=& e^{-t\wt{\psi}(\xi)}
\end{eqnarray*}
and
\begin{eqnarray*}
	\wt{\psi}(\xi)
	&=&\int_{\R^{d}}(1-\cos(\xi\cdot y))\nu(dy)\\
	&=&\int_{\R^{d}}(1-\cos(\xi\cdot y))\wt{\nu}(dy)\\
	&=&\int_{\R^{d}}(1-e^{i\xi\cdot y}+i\xi\cdot y1_{|y|\leq 1})\wt{\nu}(dy),
\end{eqnarray*}
the process $\wt{X}_{t}$ is a L\'{e}vy process with characteristic exponent $\wt{\psi}(\xi)$ and the L\'evy measure $\wt{\nu}$. We say $\wt{X}_{t}$ the symmetrization of $X_{t}$. Define $\wt{P}_{t}f(x)=\E^{x}[f(\wt{X}_{t})]$. The Fourier transform of $\wt{P}_{t}f$ is given by 
\begin{eqnarray*}
	\sF(\wt{P}_{t}f)(\xi)
	=e^{-t\wt{\psi}(\xi)}\wh{f}(\xi)
	=e^{-t\Re(\psi(\xi))}\wh{f}(\xi).
\end{eqnarray*}

Since $\wt{X}_{t}$ is a pure jump symmetric L\'{e}vy process and the measure $\wt{\nu}$ satisfies \eqref{HWcondition} condition, it leads us to apply the result of \cite{BBL} for the symmetrization $\wt{X}_{t}$. In particular, we obtain two side estimates for the square functions of $\wt{X}_{t}$. We define the square functions of the symmetrized process $\wt{X}_{t}$ by
\begin{eqnarray}\label{defsquareftn}
	&&\wt{G}(f)(x)=\Paren{\int_{0}^{\infty}\int_{\R^{d}}\Abs{\wt{P_{t}}f(x+y)-\wt{P_{t}}f(x)}^{2}\wt{\nu}(dy)dt}^{\frac{1}{2}},\\
	&&\wt{G}_{\ast}(f)(x)=\Paren{\int_{0}^{\infty}\int_{A(t,x,f)}\Abs{\wt{P_{t}}f(x+y)-\wt{P_{t}}f(x)}^{2}\wt{\nu}(dy)dt}^{\frac{1}{2}}\label{symG*}
\end{eqnarray}
where $A(t,x,f):=\{y\in\R^{d}:|\wt{P_{t}}f(x)|>|\wt{P_{t}}f(x+y)|\}$. The following lemma is found in \cite[Theorem 4.1, Corollary 4.4 ]{BBL}.
\begin{lemma}\label{squarefunctions}
	Let $2\leq p<\infty$ and $f\in L^{p}(\R^{d})$. Then there are constants $c_{p}$ and $C_{p}$ depending only on $p$ such that 
	\begin{eqnarray*}
		c_{p}\norm{f}_{p}\leq \norm{\wt{G}(f)}_{p}\leq C_{p}\norm{f}_{p}. 
	\end{eqnarray*}
	If $1 < p<\infty$ and $f\in L^{p}(\R^{d})$, then we have 
	\begin{eqnarray*}
		d_{p}\norm{f}_{p}\leq \norm{\wt{G}_{\ast}(f)}_{p}\leq D_{p}\norm{f}_{p}. 
	\end{eqnarray*}
	for some $d_{p}$ and $D_{p}$ depending only on $p$.
\end{lemma}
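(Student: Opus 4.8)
The plan is to derive both chains of inequalities from the Hardy--Stein identity applied to the \emph{symmetrized} process $\wt{X}_{t}$. This is legitimate because $\wt{X}_{t}$ is a pure jump symmetric L\'evy process whose L\'evy measure $\wt{\nu}$ satisfies \eqref{HWcondition}, so Theorem \ref{taylorexpansion} holds verbatim for $\wt{P}_{t}$. Combining it with the comparison $F(a,b;p)\approx K(a,b;p)=(b-a)^{2}(|a|\vee|b|)^{p-2}$ from Lemma \ref{FKfunction} turns the identity into
\[
	\norm{f}_{p}^{p}\approx \int_{\R^{d}}\int_{0}^{\infty}\int_{\R^{d}}\Paren{\wt{P}_{t}f(x+y)-\wt{P}_{t}f(x)}^{2}\Paren{|\wt{P}_{t}f(x)|\vee|\wt{P}_{t}f(x+y)|}^{p-2}\wt{\nu}(dy)\,dt\,dx.
\]
When $p=2$ the weight is $1$, $F(a,b;2)=(b-a)^{2}$, and this becomes the exact isometry $\norm{\wt{G}(f)}_{2}=\norm{f}_{2}$, which already yields both displayed inequalities for $p=2$ and serves as the base case.

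For the restricted square function $\wt{G}_{\ast}$ I would exploit that the cut set $A(t,x,f)=\{y:|\wt{P}_{t}f(x)|>|\wt{P}_{t}f(x+y)|\}$ is chosen precisely so that on it $|\wt{P}_{t}f(x)|\vee|\wt{P}_{t}f(x+y)|=|\wt{P}_{t}f(x)|$, collapsing the weight onto the base point $x$. Using the symmetry of $\wt{\nu}$ to pair the configuration $(x,y)$ with $(x+y,-y)$, the full integrand above splits into the contribution over $A$ and a symmetric copy of it, so the Hardy--Stein integral is comparable to its restriction to $A$; this gives
\[
	\norm{f}_{p}^{p}\approx \int_{\R^{d}}\int_{0}^{\infty}\int_{A(t,x,f)}\Paren{\wt{P}_{t}f(x+y)-\wt{P}_{t}f(x)}^{2}|\wt{P}_{t}f(x)|^{p-2}\wt{\nu}(dy)\,dt\,dx=:J.
\]
Both inequalities for $\wt{G}_{\ast}$ over the full range $1<p<\infty$ then reduce to comparing $J$ with $\norm{\wt{G}_{\ast}(f)}_{p}^{p}=\int_{\R^{d}}\wt{G}_{\ast}(f)(x)^{p}dx$. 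I would dominate the $t$-dependent weight $|\wt{P}_{t}f(x)|^{p-2}$ by a power of the maximal function $f^{\ast}(x):=\sup_{t>0}|\wt{P}_{t}f(x)|$, which is bounded on $L^{p}$, pull it out of the $t,y$ integration, and use H\"older's inequality with the conjugate exponents $(\tfrac{p}{2},\tfrac{p}{p-2})$ in the appropriate direction (reversed for $1<p<2$, where the regularization $F_{\varep}$ and the bound $F_{\varep}\le\tfrac{1}{p-1}F$ of Lemma \ref{FKfunction} legitimize the limiting manipulations) to pass between $J$ and $\norm{\wt{G}_{\ast}(f)}_{p}^{p}$ up to constants.

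The full square function $\wt{G}$ requires $p\ge2$. Its lower bound $\norm{f}_{p}\lesssim\norm{\wt{G}(f)}_{p}$ comes for free from the pointwise inequality $\wt{G}_{\ast}(f)\le\wt{G}(f)$ together with the $\wt{G}_{\ast}$ lower bound just obtained, while the upper bound $\norm{\wt{G}(f)}_{p}\le C_{p}\norm{f}_{p}$ is the genuine direct inequality. I would prove the latter by duality, writing $\norm{\wt{G}(f)}_{p}^{2}=\norm{\wt{G}(f)^{2}}_{p/2}$, testing against nonnegative $h$ with $\norm{h}_{(p/2)'}=1$, and controlling the resulting expression through the Hardy--Stein comparison above and Stein's maximal inequality; the restriction to $p\ge2$ enters because there $(|a|\vee|b|)^{p-2}$ dominates $|a|^{p-2}$ in the favorable direction.

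I expect the main obstacle to be the Stein-type conversion of the \emph{weighted} integral $J$, in which the weight $|\wt{P}_{t}f(x)|^{p-2}$ sits under the $t$-integration, into the \emph{unweighted} $p/2$-th power defining $\norm{\wt{G}_{\ast}(f)}_{p}^{p}$: since the weight varies with $t$ it cannot be factored directly, and controlling it uniformly forces the $L^{p}$-boundedness of $f^{\ast}$ (Doob/Stein maximal inequality) together with a two-sided duality argument engineered so that \emph{both} the upper and the lower estimates survive. The comparability of the restricted and full Hardy--Stein integrals via the symmetry of $\wt{\nu}$, and the $\varep\to0$ limiting argument in the range $1<p<2$, are the remaining technical points; both follow the template already used in the proof of Theorem \ref{taylorexpansion} and in \cite{BBL,BogdanBL}.
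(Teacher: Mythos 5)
Your proposal reconstructs a proof that the paper itself never gives: the paper's entire ``proof'' of Lemma \ref{squarefunctions} is the observation, made just before the statement, that $\wt{X}_t$ is a symmetric pure jump L\'evy process whose L\'evy measure $\wt{\nu}$ still satisfies \eqref{HWcondition} (because $\Re\wt{\psi}=\Re\psi$), so the lemma can be quoted verbatim from \cite[Theorem 4.1, Corollary 4.4]{BBL}. What you are really attempting is a reconstruction of the proof inside \cite{BBL}, and your outline does follow that paper's architecture: the Hardy--Stein identity for $\wt{P}_t$ plus the comparison $F\approx K$ of Lemma \ref{FKfunction}; the involution $(x,y)\mapsto(x+y,-y)$ (legitimate since $\wt{\nu}$ is symmetric and Lebesgue measure is translation invariant), which collapses the weight to $|\wt{P}_tf(x)|^{p-2}$ on $A(t,x,f)$ and gives $\norm{f}_p^p\approx J$, with $J$ your weighted integral restricted to $A(t,x,f)$; and maximal-function, H\"older, and duality conversions. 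The $p=2$ isometry, the reduction to $J$, and the $p\geq 2$ lower bounds are correct as sketched.

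There are, however, two genuine gaps. First, your claim that \emph{both} inequalities for $\wt{G}_{\ast}$ on all of $1<p<\infty$ follow from the maximal-function/H\"older conversion, ``reversed for $1<p<2$,'' is false: since $|\wt{P}_tf(x)|\leq f^{\ast}(x):=\sup_{s}|\wt{P}_sf(x)|$, the bound $|\wt{P}_tf(x)|^{p-2}\leq f^{\ast}(x)^{p-2}$ holds only when $p\geq 2$, where it yields $J\leq\norm{f^{\ast}}_p^{p-2}\norm{\wt{G}_{\ast}(f)}_p^{2}$ and hence only the \emph{lower} bound $\norm{f}_p\lesssim\norm{\wt{G}_{\ast}(f)}_p$; for $1<p<2$ the inequality flips and the analogous computation yields only the \emph{upper} bound. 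H\"older does not reverse, so the two remaining estimates need different arguments: for $p\geq2$ the upper bound follows from $\wt{G}_{\ast}\leq\wt{G}$ together with $\norm{\wt{G}(f)}_p\lesssim\norm{f}_p$, and for $1<p<2$ the lower bound follows by polarizing the $p=2$ Hardy--Stein identity, $\abs{\int fg\,dx}\leq 2\int\wt{G}_{\ast}(f)\wt{G}(g)\,dx\leq 2\norm{\wt{G}_{\ast}(f)}_p\norm{\wt{G}(g)}_q$ (the same splitting the paper uses in proving Theorem \ref{Fouriermultiplier}), and then invoking the $\wt{G}$ upper bound at the conjugate exponent $q>2$. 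Thus every missing piece funnels into the single estimate $\norm{\wt{G}(f)}_p\leq C_p\norm{f}_p$, $p\geq2$ --- which is the second gap: you dispatch it in one sentence. Naming duality and Stein's maximal inequality is not a mechanism. The Hardy--Stein identity controls $J$, whose weight $|\wt{P}_tf(x)|^{p-2}$ is adapted to $f$, whereas duality requires bounding $\int\wt{G}(f)^2h\,dx$ for an arbitrary $h\geq0$ with $\norm{h}_{(p/2)'}=1$; exchanging one weight for the other is precisely where \cite{BBL} exploits the self-adjointness of $\wt{P}_t$ (i.e.\ the symmetry of $\wt{\nu}$ --- the very property the symmetrization was designed to produce) to move the semigroup onto $h$ and apply Stein's maximal theorem for symmetric Markovian semigroups. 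Until that step is written out, your argument establishes the lemma only for $p=2$, the lower bounds for $p\geq2$, and the $\wt{G}_{\ast}$ upper bound for $1<p<2$; the complete and efficient route remains the paper's: verify the hypotheses for $\wt{X}_t$ and cite \cite{BBL}.
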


For a function $f$ and a measure $\mu$, the essential supremum of $f$ with respect to the measure $\mu$ is denoted by $\norm{f}_{\infty,\nu}$. The following lemma plays an important role in the symmetrization argument. 
\begin{lemma}\label{msrlemma}
	Let $\ol{\nu}(B):=\frac{1}{2}(\nu(B)-\nu(-B))$ for any measurable sets $B\subseteq \R^{d}$. Then, there is a measurable function $r(y)$ such that $\ol{\nu}(dy)=r(y)\wt{\nu}(dy)$. Furthermore, the function $r(y)$ is bounded $\wt{\nu}$-a.s. with $\norm{r}_{\infty,\wt{\nu}}\leq 1$.
\end{lemma}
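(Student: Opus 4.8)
The plan is to exhibit $r$ as a Radon--Nikodym derivative and to read off the bound $\norm{r}_{\infty,\wt\nu}\le 1$ from a domination of measures. I would write $\mu(B):=\nu(-B)$ for the reflected L\'evy measure, so that $\wt\nu=\frac12(\nu+\mu)$ and $\ol\nu=\frac12(\nu-\mu)$. A preliminary point is that all the measures in sight are $\sigma$-finite: the integrability condition $\int(1\wedge|y|^2)\nu(dy)<\infty$ forces $\nu$ to be finite on each annulus $\{1/(n+1)<|y|\le 1/n\}$ (where $1\wedge|y|^2=|y|^2>1/(n+1)^2$) and on $\{|y|>1\}$, hence $\sigma$-finite on $\R^d\setminus\{0\}$; the same is then true of $\mu$, $\wt\nu$, and $\ol\nu$. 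This is what legitimizes the use of the Radon--Nikodym theorem below.

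The first step is absolute continuity. If $\wt\nu(B)=0$ then $\nu(B)+\nu(-B)=0$, and since $\nu$ is a nonnegative measure this forces $\nu(B)=\nu(-B)=0$; applying the same reasoning to every measurable $A\subseteq B$ shows that $\ol\nu$ vanishes on all subsets of $B$. Hence $\nu\ll\wt\nu$ and $\mu\ll\wt\nu$, and therefore $\ol\nu\ll\wt\nu$. By the Radon--Nikodym theorem there is a measurable $r$ with $\ol\nu(dy)=r(y)\wt\nu(dy)$, which is the function claimed.

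For the bound I would introduce $g:=d\nu/d\wt\nu$ and $h:=d\mu/d\wt\nu$, both nonnegative $\wt\nu$-a.e. Since $\wt\nu=\frac12(\nu+\mu)$, comparing densities with respect to $\wt\nu$ gives $\frac12(g+h)=1$, i.e. $g+h=2$ $\wt\nu$-a.e. As $\ol\nu=\frac12(\nu-\mu)$ we have $r=\frac12(g-h)$, and therefore $|r|=\frac12|g-h|\le\frac12(g+h)=1$ $\wt\nu$-a.e., which is exactly $\norm{r}_{\infty,\wt\nu}\le 1$.

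The argument is elementary, and the only steps requiring genuine care are the two that are easy to take for granted: verifying $\sigma$-finiteness so that the Radon--Nikodym derivatives are well defined, and making precise the passage from the a.e. inequality $|r|\le 1$ to the essential-supremum statement $\norm{r}_{\infty,\wt\nu}\le 1$. As an alternative to the density computation one may argue directly that $|\ol\nu|\le\wt\nu$ as measures, using $|\nu(B)-\nu(-B)|\le\nu(B)+\nu(-B)$ summed over partitions, and then deduce $\int_B|r|\,d\wt\nu\le\wt\nu(B)$ for every $B$, which forces $|r|\le 1$ off a $\wt\nu$-null set. I do not expect any substantive obstacle in either route.
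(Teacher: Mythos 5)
Your proof is correct, and while its first half coincides with the paper's, the second half takes a genuinely different route. For existence you argue exactly as the paper does: check $\sigma$-finiteness (you do this in more detail, via annuli), show $\wt{\nu}(B)=0$ forces $\nu(B)=\nu(-B)=0$ hence $\ol{\nu}\ll\wt{\nu}$, and apply Radon--Nikodym. For the bound $\norm{r}_{\infty,\wt{\nu}}\leq 1$, however, you introduce the auxiliary densities $g=d\nu/d\wt{\nu}$ and $h=d\mu/d\wt{\nu}$, invoke the $\wt{\nu}$-a.e.\ uniqueness of Radon--Nikodym derivatives to get $\tfrac12(g+h)=1$ and $r=\tfrac12(g-h)$ a.e., and conclude $|r|\leq\tfrac12(g+h)=1$ by the triangle inequality. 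The paper instead argues by contradiction: it sets $B^{\varepsilon}=\{y:|r(y)|>1+\varepsilon\}$, derives $\ol{\nu}(B^{\varepsilon})>(1+\varepsilon)\wt{\nu}(B^{\varepsilon})$, rewrites this as $\varepsilon\nu(B^{\varepsilon})+(2+\varepsilon)\nu(-B^{\varepsilon})<0$, and concludes $\nu(B^{\varepsilon})=\nu(-B^{\varepsilon})=0$. Your computation is arguably cleaner: it is direct rather than by contradiction, and it treats both signs of $r$ simultaneously, whereas the paper's displayed inequality, taken literally, only covers the portion of $B^{\varepsilon}$ where $r>1+\varepsilon$ (the portion where $r<-(1+\varepsilon)$ requires the symmetric inequality applied to $-\ol{\nu}$). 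What the paper's route buys is economy: it uses only the single derivative $r$ and never needs the uniqueness statement for Radon--Nikodym derivatives. One caveat, shared by both proofs and indeed by the lemma statement itself, is that $\ol{\nu}(B)$ is formally of the form $\infty-\infty$ on sets where $\nu(B)$ and $\nu(-B)$ are both infinite, so $\ol{\nu}$ should really be interpreted via a $\sigma$-finite exhaustion; your $g$--$h$ decomposition in fact gives the cleanest repair, since one may simply define $r:=\tfrac12(g-h)$ and verify $\ol{\nu}(B)=\int_{B}r\,d\wt{\nu}$ on sets where $\nu$ and $\nu(-\cdot)$ are finite.
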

\begin{proof}
	The existence of such a measurable function follows from the Radon-Nikodym theorem. Let us explain this in detail. Note that $\nu$ is $\sigma$-finite since $\nu(\{0\})=0$ and $\int_{\R^{d}}(1\wedge|x|^{2})\nu(dx)<\infty$. So are $\wt{\nu}$ and $\ol{\nu}$. Suppose that $B\subseteq \R^{d}$ is a measurable set such that $\wt{\nu}(B)=0$. Since $\nu$ is a positive measure, we have $\nu(B)=\nu(-B)=0$, which implies $\ol{\nu}(B)=0$. Thus $\ol{\nu}$ is absolutely continuous with respect to $\wt{\nu}$. By the Radon-Nikodym theorem, we conclude that there is a measurable function $r(y)$ such that $\ol{\nu}(dy)=r(y)\wt{\nu}(y)$.
	
	To see $r(y)$ is bounded, we consider the set $B^{\varep}:=\{y\in\R^{d}:|r(y)|>1+\varep\}$ for an arbitrary $\varep>0$. From the relation $\ol{\nu}(dy)=r(y)\wt{\nu}(y)$ obtained above, we have $\ol{\nu}(B^{\varep})>(1+\varep)\wt{\nu}(B^{\varep})$. It then yields $\varep \nu(B^{\varep})+(2+\varep)\nu(-B^{\varep})<0$ so that $\nu(B^{\varep})=\nu(-B^{\varep})=0$. Therefore, $r(y)$ is bounded $\wt{\nu}$-a.s. and $\norm{r}_{\infty,\wt{\nu}}\leq 1$.
\end{proof}

\begin{proof}[Proof of Theorem \ref{Fouriermultiplier}]
	The first argument is directly obtained by Theorem \ref{taylorexpansion}. Indeed, since $F(a,b;2)=|a-b|^{2}$, Theorem \ref{taylorexpansion} yields that
	\begin{eqnarray*}
		\norm{f}_{2}^{2}
		&=&\int_{\R^{d}}\int_{0}^{\infty}\int_{\R^{d}}F(P_{t}f(x),P_{t}f(x+y);2)\nu (dy)dtdx\\
		&=&\int_{\R^{d}}\int_{0}^{\infty}\int_{\R^{d}}|P_{t}f(x)-P_{t}f(x+y)|^{2}\nu (dy)dtdx.
	\end{eqnarray*}
	It then follows from the Cauchy-Schwartz inequality that
	\begin{eqnarray*}
		|\Lambda_{\phi}(f,g)|
		&\leq & \norm{\phi}_{\infty}\int_{\R^{d}}\int_{0}^{\infty}\int_{\R^{d}}|P_{t}f(x+y)-P_{t}f(x)||P_{t}g(x+y)-P_{t}g(x)|\nu(dy)dtdx\\
		&\leq & \norm{\phi}_{\infty}\norm{f}_{2}\norm{g}_{2}.
	\end{eqnarray*}
	Since $f,g\in L^{2}(\R^{d})$, Theorem \ref{taylorexpansion} implies that $\Lambda_{\phi}(f,g)$ is absolutely convergent.
	To see the second assertion, we use Parseval's formula \eqref{parseval} so that
	\begin{eqnarray*}
		&&\Lambda_{\phi}(f,g)\\
		&=& \frac{1}{(2\pi)^{d}}\int_{0}^{\infty}\int_{\R^{d}}\int_{\R^{d}}
		\sF(P_{t}f(\cdot+y)-P_{t}f(\cdot))(\xi)
		\ol{\sF(P_{t}g(\cdot+y)-P_{t}g(\cdot))(\xi)}\phi(t,y)
		d\xi\nu(dy)dt\\
		&=& \frac{1}{(2\pi)^{d}}\int_{0}^{\infty}\int_{\R^{d}}\int_{\R^{d}}
		(e^{i \xi\cdot y}-1)e^{-t\psi(\xi)}\wh{f}(\xi)\ol{(e^{i \xi\cdot y}-1)e^{-t\psi(\xi)}\wh{g}(\xi)}\phi(t,y)
		d\xi\nu(dy)dt\\
		&=& \frac{1}{(2\pi)^{d}}\int_{0}^{\infty}\int_{\R^{d}}\int_{\R^{d}}
		|e^{ i \xi\cdot y}-1|^{2}e^{-2t\Re(\psi(\xi))}\wh{f}(\xi)\ol{\wh{g}(\xi)}\phi(t,y)
		d\xi\nu(dy)dt
	\end{eqnarray*}
	where $\psi(\xi)$ is the characteristic exponent of $(X_{t})_{t\geq0}$. In the second equality, we have used the fact that
	\begin{eqnarray*}
		\sF(P_{t}f(\cdot+y)-P_{t}f(\cdot))(\xi)
		=(e^{ i \xi\cdot y}-1)\sF(P_{t}f)(\xi)
		=(e^{ i \xi\cdot y}-1)e^{-t\psi(\xi)}\wh{f}(\xi).
	\end{eqnarray*}
	Let $\wt{\nu}(B)=\frac{1}{2}(\nu(B)+\nu(-B))$ and $\ol{\nu}(B)=\frac{1}{2}(\nu(B)-\nu(-B))$ for any measurable set $B$ in $\R^{d}$. By Lemma \ref{msrlemma}, there is a measurable function $r(y)$ such that $\ol{\nu}(dy)=r(y)\wt{\nu}(dy)$ with $\norm{r}_{\infty,\wt{\nu}}\leq 1$. Using $\nu=\wt{\nu}+\ol{\nu}$, we have
	\begin{eqnarray*}
		\Lambda_{\phi}(f,g)
		&=&\frac{1}{(2\pi)^{d}}\int_{0}^{\infty}\int_{\R^{d}}\int_{\R^{d}}
		|e^{i \xi\cdot y}-1|^{2}e^{-2t\Re(\psi(\xi))}\wh{f}(\xi)\ol{\wh{g}(\xi)}\phi(t,y)
		d\xi\wt{\nu}(dy)dt\\
		&&+\frac{1}{(2\pi)^{d}}\int_{0}^{\infty}\int_{\R^{d}}\int_{\R^{d}}
		|e^{i \xi\cdot y}-1|^{2}e^{-2t\Re(\psi(\xi))}\wh{f}(\xi)\ol{\wh{g}(\xi)}\phi(t,y)
		d\xi\ol{\nu}(dy)dt.
	\end{eqnarray*}
	If we define $\eta(t,y)=\phi(t,y)(1+r(y))$, then $\eta$ is bounded $\wt{\nu}$-a.s; thus, we obtain
	\begin{eqnarray*}
		\Lambda_{\phi}(f,g)
		=\frac{1}{(2\pi)^{d}}\int_{0}^{\infty}\int_{\R^{d}}\int_{\R^{d}}
		|e^{ i \xi\cdot y}-1|^{2}e^{-2t\Re(\psi(\xi))}\wh{f}(\xi)\ol{\wh{g}(\xi)}\eta(t,y)
		d\xi\wt{\nu}(dy)dt.
	\end{eqnarray*}
	We consider $\wt{X}_{t}$ and $\wt{P}_{t}$, the symmetrization of $X_{t}$ and $P_{t}$. Since the characteristic exponent of $\wt{X}_{t}$ is the real part of $\psi(\xi)$, $\wt{\psi}(\xi)=\Re(\psi(\xi))$, and 
	\begin{eqnarray*}
		\sF(\wt{P}_{t}f(\cdot+y)-\wt{P}_{t}f(\cdot))(\xi)
		=(e^{ i \xi\cdot y}-1)\sF(\wt{P}_{t}f)(\xi)
		=(e^{ i \xi\cdot y}-1)e^{-t\Re(\psi(\xi))}\wh{f}(\xi),
	\end{eqnarray*}
	it follows from Parseval's formula \eqref{parseval} that
	\begin{eqnarray}\label{symmlambda}
		\Lambda_{\phi}(f,g)
		&=& \int_{\R^{d}}\int_{0}^{\infty}\int_{\R^{d}}
		(\wt{P_{t}}f(x+y)-\wt{P_{t}}f(x))(\wt{P_{t}}g(x+y)-\wt{P_{t}}g(x))\eta(t,y)\wt{\nu}(dy)dtdx\nonumber\\
		&=:& \wt{\Lambda}_{\eta}(f,g).
	\end{eqnarray}
	To show the boundedness of $\Lambda_{\phi}(f,g)$, we use square functions defined in \eqref{defsquareftn}. It is enough to show the case $p>2$ and $1<q<2$. Note that $\norm{\eta}_{\infty,\wt{\nu}}$ is finite and $\norm{\eta}_{\infty, \wt{\nu}}\leq 2\norm{\phi}_{\infty}$. Applying Cauchy-Schwartz and H\"older inequalities, we have
	\begin{eqnarray*}
		|\wt{\Lambda}_{\eta}(f,g)|
		&\leq & \norm{\eta}_{\infty, \wt{\nu}} \int_{\R^{d}}\int_{0}^{\infty}\int_{\R^{d}}
		|\wt{P_{t}}f(x+y)-\wt{P_{t}}f(x)||\wt{P_{t}}g(x+y)-\wt{P_{t}}g(x)|\wt{\nu}(dy)dtdx\\
		&\leq & 2\norm{\eta}_{\infty, \wt{\nu}} \int_{\R^{d}}\int_{0}^{\infty}\int_{A(t,x,g)}
		|\wt{P_{t}}f(x+y)-\wt{P_{t}}f(x)||\wt{P_{t}}g(x+y)-\wt{P_{t}}g(x)|\wt{\nu}(dy)dtdx\\
		&\leq & 2\norm{\eta}_{\infty, \wt{\nu}} \int_{\R^{d}}\wt{G}(f)(x)\wt{G}_{\ast}(g)(x)dx\\
		&\leq & 2\norm{\eta}_{\infty, \wt{\nu}} \norm{\wt{G}(f)}_{p}\norm{\wt{G}_{\ast}(g)}_{q}
	\end{eqnarray*}
	where $A(t,x,g):=\{y\in\R^{d}:|\wt{P_{t}}g(x)|>|\wt{P_{t}}g(x+y)|\}$. It follows from Lemma \ref{squarefunctions} and \eqref{symmlambda} that
	\begin{eqnarray*}
		\Lambda_{\phi}(f,g)\leq 4C_{p}D_{q}\norm{\phi}_{\infty}\norm{f}_{p}\norm{g}_{q}.
	\end{eqnarray*}
	Therefore, the Riesz representation theorem yields that there is a unique linear operator $S_{\phi}$ satisfying $\Lambda_{\phi}(f,g)=\brk{S_{\phi}(f),g}$.
\end{proof}

\end{document}